		\patchcmd{\ps@pprintTitle}{\footnotesize\itshape
		Preprint submitted to \ifx\@journal\@empty Elsevier
		\else\@journal\fi\hfill\today}{\relax}{}{}
\newtheorem{theorem}{Theorem}[section]
\newtheorem{lemma}{Lemma}[section]
\journal{arXiv}
\def\tr{{\rm{tr}}}
\begin{document}

\begin{frontmatter}
\title{A May--Holling--Tanner predator-prey model with multiple Allee effects on the prey and an alternative food source for the predator}
\author[QUTaddress,UDLAaddress]{Claudio Arancibia--Ibarra}
\author[USDaddress]{Jos\'e Flores}
\author[QUTaddress]{Michael Bode}
\author[QUTaddress]{Graeme Pettet}
\author[QUTaddress]{Peter van Heijster}
\address[QUTaddress]{School of Mathematical Sciences, Queensland University of Technology, \\ 
GPO Box 2434, GP Campus, Brisbane, Queensland 4001 Australia\\
claudio.arancibia@hdr.qut.edu.au}
\address[UDLAaddress]{Facultad de Educaci\'on, Universidad de Las Am\'ericas,\\
Av. Manuel Montt 948, Santiago, Chile}
\address[USDaddress]{Department of Computer Science, The University of South Dakota,\\
Vermillion, SD 57069, South Dakota, USA}
\begin{abstract}
We study a predator-prey model with Holling type I functional response, an alternative food source for the predator, and multiple Allee effects on the prey. We show that the model has at most two equilibrium points in the first quadrant, one is always a saddle point while the other can be a repeller or an attractor. Moreover, there is always a stable equilibrium point that corresponds to the persistence of the predator population and the extinction of the prey population. Additionally, we show that when the parameters are varied the model displays a wide range of different bifurcations, such as saddle-node bifurcations, Hopf bifurcations, Bogadonov-Takens bifurcations and homoclinic bifurcations. We use numerical simulations to illustrate the impact changing the predation rate, or the non-fertile prey population, and the proportion of alternative food source have on the basins of attraction of the stable equilibrium point in the first quadrant (when it exists). In particular, we also show that the basin of attraction of the stable positive equilibrium point in the first quadrant is bigger when we reduce the depensation in the model.
\end{abstract}
\begin{keyword}
May--Holling--Tanner model, strong Allee effect, multiple Allee effect, bifurcations, homoclinic curve.
\end{keyword}
\end{frontmatter}

\newpage
\section{Introduction}
The goal of analysing the dynamics of complex ecological systems is to better describe the different interactions between species, to understand their longterm behaviour, and to predict how they will respond to management interventions \cite{hooper,may2}. Current predator-prey dynamics studies often use nonlinear mathematical models to describe the species’ interactions and answer these questions. These models aim to be representative of real natural phenomena and they should capture the essentials of the dynamics. However, new theoretical, empirical, and observational research in ecology is revealing species’ interactions to be much more complicated than previous models admit \cite{hanski2, hanski,roux,turchin}. Moreover, it is becoming increasingly apparent that our understanding of ecosystem dynamics will depend \cite{wood}, to some extent, on the particular nature of these interaction processes, such as the functional response or predation rate \cite{bimler,santos,turchin}. 

The standard approach for using models to understand ecological systems is to construct a model from first principles, and then compare species’ abundance timeseries to the predictions from those models. However, this approach becomes more difficult when we add additional nuance to standard models, making them more complex, more nonlinear, and more difficult to parameterise. For instance, Graham and Lambin \cite{graham} showed that field-vole (\textit{Microtus agrestis}) survival can be affected by reducing the weasel predation. They also demonstrated that weasel proportion was suppressed in summer and autumn, while the voles (\textit{Microtus agrestis}) population always declined to low density. However, they argued that the underlying model was too hard to study due to the large number of parameters. Some ecologists have attempted to resolve this issue by applying qualitative approaches, which make few assumptions about the models’ functional forms or parameters \cite{baker,levins,Raymond}. However, we can also approach the problem by trying to understand the topology of the associated dynamical system, rather than specific trajectories \cite{prager}. Such a topological approach may offer general and global insights into the behaviour of the system without requiring accurate parameter estimates. 

The phenomena described above can be observed in predator-prey theory. The original Lotka-Volterra predator prey models \cite{lotka} were straightforward, with simple functional forms for species’ growth and interactions. Empirical observations required successive changes to these assumptions, leading inter alia to the May--Holling--Tanner model, which is itself a special case of the Leslie--Gower predator-prey model \cite{hsu,saez}. The May--Holling--Tanner model is described by an autonomous two-dimensional system of ordinary differential equations, where the equations for the growth of the predator and prey are logistic-type functions, where the predator carrying capacity is a prey dependent \cite{aguirre1,flores2,turchin}. The functional response describing the predation is Holling Type I, which, for instance, models filter feeders where searching for food can occur at the same time that the species processes the food \cite{liermann}. A Holling Type I response function corresponds to a linear increasing function in the prey $H(x)=qx$ \cite{holling}. This type of functional response is also called Lotka-Volterra type. In particular, the model is given by
\begin{equation}\label{ht1}
\begin{aligned}
\dfrac{dx}{dt} &=	 rx\left(1-\dfrac{x}{K} \right)  - qxy\,,  \\
\dfrac{dy}{dt} &=	 sy\left(1-\dfrac{y}{nx} \right) \,. 
\end{aligned}
\end{equation}
Here, $x(t)$ and $y(t)$ represent the proportion of the prey respectively predator population at time $t$; $r$ is the intrinsic growth rate for the prey; $s$ is the intrinsic growth rate for the predator; $q$ is the per capita predation rate; $K$ is the prey carrying capacity; $n$ is a measure of the quality of the prey as food for the predator; and $\widetilde{K}(x)=nx$ is the prey dependent carrying capacity of the predator.

However, even model \eqref{ht1} does not take into account that some predators act as generalists \cite{andersson,erlinge,hansson}. For instance, weasels (\textit{Mustela nivalis}) in the boreal forest region in Fennoscandia can switch to an alternative food source, although its population growth may still be limited by the fact that its preferred food, voles (\textit{Microtus agrestis}), are not available abundantly \cite{hanski,korobei,turchin}. This characteristic can be modelled by modifying the prey dependent carrying capacity of the predator \cite{aziz}. That is, in \eqref{ht1} 
\begin{equation}\label{food}
\widetilde{K}(x)=nx\quad\text{is replaced by}\quad \overline{K}(x)=nx+c,
\end{equation}
where we assumed that the alternative food source is constant, which in turns means that the predator proportion is small in compared to the alternative food source. Model \eqref{ht1} with \eqref{food} was studied in \cite{arancibia7,aziz}. It was shown that, in comparison to the original model \eqref{ht1}, there is an extra equilibrium point on the $y$-axis corresponding to the extinction of the prey but not the predator. Moreover, the nodependentn-negative parameter $c$ desingularises the origin of system \eqref{ht1}. 

Another effect that is not incorporated in \eqref{ht1} is the Allee effect \cite{allee2}. The Allee effect corresponds to a density-dependent phenomenon in which fitness growth initially increases as population density increases \cite{berec, courchamp, kramer, stephens}. This effect is usually modelled by adding a factor $(x-m)$ to the logistic function where $m$ is the minimum viable population \cite{allee2, verdy, zhao, zu} and $0<m<K$. With the Allee effect included in \eqref{ht1}
\begin{equation}\label{allee1}
L_o(x)=rx\left(1-\dfrac{x}{K}\right)\quad\text{is replaced by}\quad L_m(x)=rx\left( 1-\dfrac{x}{K}\right)\left(x-m\right).
\end{equation}
For $0<m<K$, the per-capita grow rate of the the prey population with the Allee effect included is negative, but increasing, for $x \in [0,m)$, and this is referred to as the strong Allee effect.  When $m \leq 0$, the per-capita growth rate is positive but increases at low prey population densities and this is referred to as the weak Allee effect \cite{berec,courchamp2}. Additionally, the Allee effect can also refer to a decrease in per capita fertility rate at low population densities or a phenomenon in which fitness, or population growth, increases as population density increases \cite{allee2, courchamp, kramer, stephens}. For instance, Ostfeld and Canhan \cite{ostfeld} found that the stabilisation of vole (\textit{Microtus agrestis}) populations in southeastern New York depends on the variation in reproductive rate and recruitment of the population. This effect is referred to as the multiple Allee effect \cite{berec}, sometimes also called the double Allee effect \cite{angulo,gonzalez5}. To incorporate this multiple Allee effect in \eqref{allee1} $L_m(x)$ is replaced by
\begin{equation}\label{allee2} 
L_{b}(x)=rx\left( 1-\dfrac{x}{K}\right)\left( \dfrac{1}{x+b}\right)\left(x-m\right).
\end{equation}
Here, $b$ is the non-fertile prey population and $0<m<K$ \cite{allee2, verdy, zhao, zu}. The per-capita growth rate for the logistic growth function, strong and weak Allee effect; and the multiple Allee effect are shown in Figure \ref{Fig1}. We observe that the multiple Allee effect reduces the region of depensation, that is,  the region where the per-capita growth rate is positive and growing, when compared to the strong Allee effect. This effect can be generated by the reduction of the probability of fertilisation at lower population density \cite{liermann}. This reduction commonly occurs in plants such as \textit{Diplotaxis erucoides}, \textit{Banksia goodii} and \textit{Clarkia concinna} \cite{liermann}.  IIn particular, the depensation region for the multiple Allee effect is given by $(m,x_1)$ with 
\begin{align}
&x_1=-b+\sqrt{(b+K)(b+m)} \,, \label{DA_D}
\end{align} 
and for the strong Allee effect by  $(m,x_2)$ with
\begin{align}
&x_2=\dfrac{1}{2}(K+m)\,,  \label{SA_D}
\end{align} 
and $x_1\leq x_2$ for all values of $b$, see Figure \ref{Fig1}.
\begin{figure}
\begin{center}
\includegraphics[width=12cm]{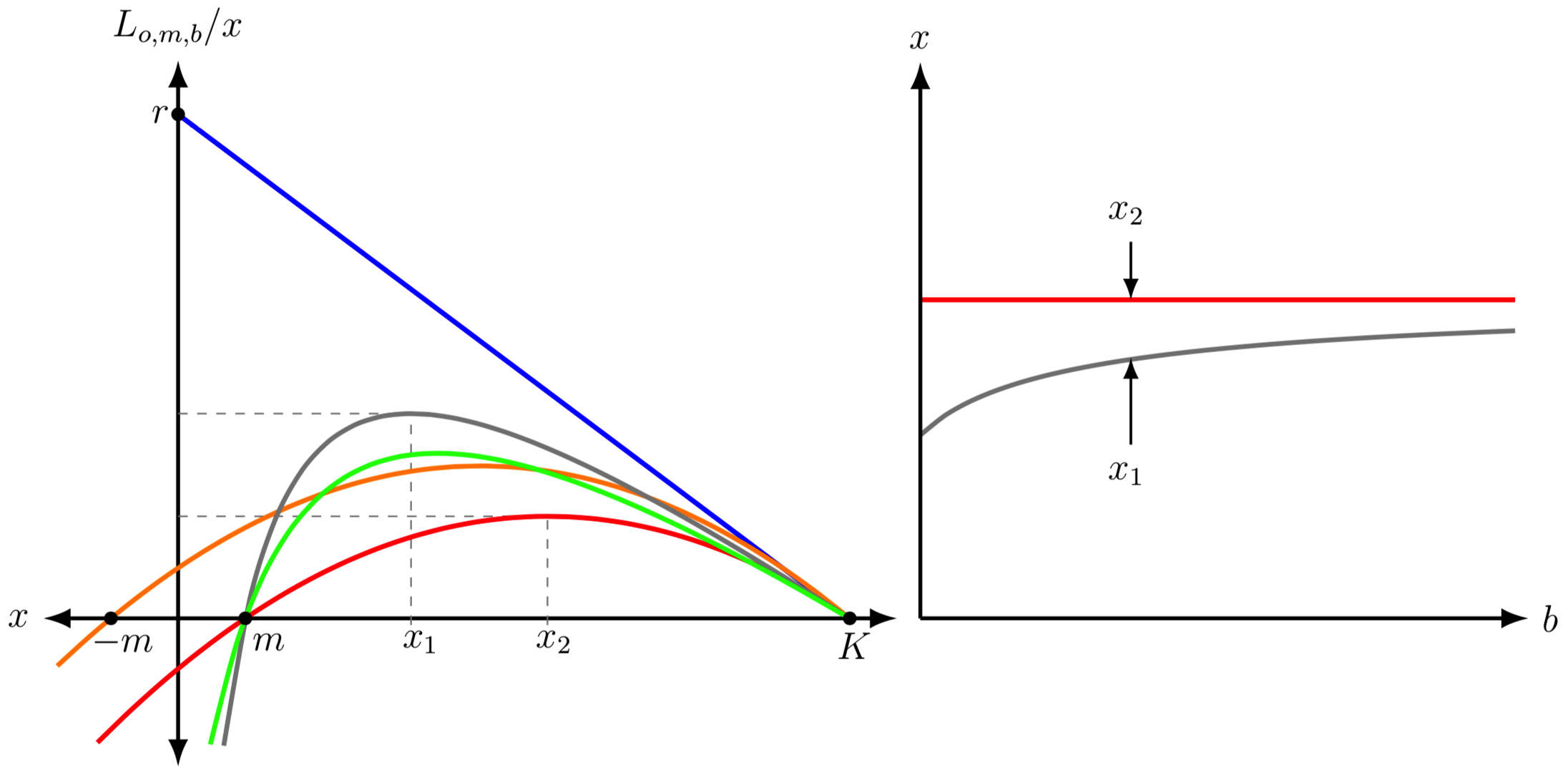}
\end{center}
\caption{In the left panel, we show the per capita growth rate of the logistic function (blue line), the strong Allee effect with $m = 0.1$ (red curve), the weak Allee effect with $m = -0.1$ (orange curve), multiple Allee effects with $m = 0.1$ and $b = 0.15$ (grey curve) and multiple Allee effects with $m = 0.1$ and $b = 0.05$ (green curve).
In the right panel, we show the size of the depensation region for the strong Allee effect \eqref{SA_D}  (red curve) and for the multiple Allee effects \eqref{DA_D} (grey curve) as function of the non-fertile prey population $b$. We observe that the depensation region for the multiple Allee effects is always smaller than the depensation region for the strong Allee effect.}
\label{Fig1}
\end{figure} 

When the alternative food \eqref{food} and the multiple Allee effect \eqref{allee2} are included in the modified May--Holling--Tanner model \eqref{ht1} it becomes
\begin{equation}\label{eq:04}
\begin{aligned}
\dfrac{dx}{dt} & = x\left( \dfrac{r}{x+b}\right)\left( 1-\dfrac{x}{K}\right)\left(x-m\right) -qxy,\\ 
\dfrac{dy}{dt} & = sy\left( 1\ -\dfrac{y}{nx+c}\right).
\end{aligned} 
\end{equation}

The aim of this manuscript is to study the dynamics of \eqref{eq:04} and, in particular, understanding the change in dynamics the multiple Allee effect and the alternative food source causes. Additionally, models \eqref{ht1} and \eqref{eq:04} without alternative food sources revealed that there exists a subset of the system parameters where the predator and prey population goes extinct \cite{martinez2}. However, these models assumed different dynamics at low abundance, and the absence of an alternative prey. We find that the alternative food source desingularises the origin and it prevents the extinction of the predator populations. Moreover, we study the basins of attraction of the stable positive equilibrium point(s) by modifying the predation rate $q$ and/or the alternative food source $c$. Moreover, we will show that the addition of the alternative food source and the multiple Allee effects will lead to complex dynamics, and different types of bifurcations such as Hopf bifurcations, homoclinic bifurcations, saddle-node bifurcations and Bogadonov-Takens bifurcations. This manuscript also extends the properties of the May--Holling--Tanner model with multiple Allee effects studied in \cite{martinez2} that is \eqref{eq:04} with $c=0$ by showing the impact of the inclusion of alternative food sources for predators. In addition, it complements the results of the May--Holling--Tanner model considering only alternative food for the predator studied in \cite{arancibia, flores} and the model considering only a single Allee effect on the prey and no alternative food for the predator studied in \cite{vanvoorn, yue2}. Model \eqref{ht1} with functional response Holling type II, i.e. $H(x) = qx/(x+a)$, was studied in \cite{hsu} where the authors showed that there is a region a parameter space where the unique positive equilibrium point is globally asymptotically stable. This model was also studied in \cite{saez} where the authors proved the existence of two limit cycles and the species can thus coexist and oscillate.
 
The basic properties of the model are briefly described in Section \ref{model}. In Section \ref{result} we prove the stability of the equilibrium points and give the conditions for the different types of bifurcations. In addition, we discuss the impact changing the predation rate or the alternative food source has on the basins of attraction of the positive equilibrium point in system \eqref{eq:04}. We further discuss the results and give the ecological implications in Section \ref{con}.

\section{The Model}\label{model}
Following \cite{arancibia7,arancibia, arancibia2, blows}, we introduce dimensionless variables $(u,v,\tau)$ by the function $\varphi :\breve{\Omega}\times\mathbb{R}\rightarrow \Omega\times\mathbb{R}$, where $\varphi(u,v,\tau)=(x,y,t)=(Ku,nKv,\tau(u+c/(nK))(u+b/K)/r)$, $\Omega=\{(x,y)\in \mathbb{R}^2, x\geq0, y\geq0\}$ and $\breve{\Omega}=\{(u,v)\in \mathbb{R}^2, u\geq0, v\geq0\}$. Additionally, we set $B:=b/K$, $C:=c/(nK)$, $M:=m/K\in(0,1)$, $S:=s/r$ and $Q:=qnK/r$, such that $(M,B,C,S,Q)\in \Pi= (0,1)\times\mathbb{R}^4_+$. This way, we convert \eqref{eq:04} to a  topologically equivalent nondimensionalised model given by
\begin{equation}\label{eq:05}
	\begin{aligned}
	\dfrac{du}{d\tau} & =  \ u(u+C)\left(\left(u-M\right)\left( 1-u\right) -Q(u+B)v\right), \\
	\dfrac{dv}{d\tau} & =  \ Sv\left(u+B\right)\left(u-v+C\right).
	\end{aligned} 
\end{equation}
The mapping $\varphi$ is a diffeomorphism which preserve the orientation of time since $\det\varphi(u,v,\tau)=nK^2u(u+b/K)/r>0$ \cite{chicone}. Therefore, system \eqref{eq:05} is topologically equivalent to system \eqref{eq:04} in $\Omega$. 
Furthermore, system \eqref{eq:05} is of Kolmogorov type since $du/d\tau=uR(u,v)$ and $dv/d\tau=vW(u,v)$, with $R(u,v)=(u+C)(u-M)(1-u)-Q(u+C)(u+B)v$ and $W(u,v)=S(u+B)(u-v+C)$. The $u$-nullcline of system \eqref{eq:05} in $\breve{\Omega}$ is $v=(u-M)(1-u)/Q(u+B)$, while the $v$-nullcline in $\breve{\Omega}$ is $v=u+C$. Hence, the equilibrium points in $\breve{\Omega}$ for the system \eqref{eq:05} are $(0,0)$, $(M,0)$, $(0,C)$ $(1,0)$ and $(u^*,v^*)$, where $u^*$ is determined by the roots of the following equation
\begin{align} \label{eq:06}
\begin{aligned}
& p(u):=(u-M)(1-u)=Q(u+C)(u+B)=:Qd(u),\quad\text{and}\quad v^*=u^*+C \,.
\end{aligned}
\end{align}
We observe that $\lim\limits_{u \rightarrow \pm \infty} p(u)=- \infty$ and $\lim\limits_{u \rightarrow \pm \infty} d(u)=\infty$. Hence, $p(u)$ can intersect $d(u)$ in the first quadrant  in two points; one point or not at all, see Figure \ref{Fig2}. 
\begin{figure}
\centering
\includegraphics[width=9cm]{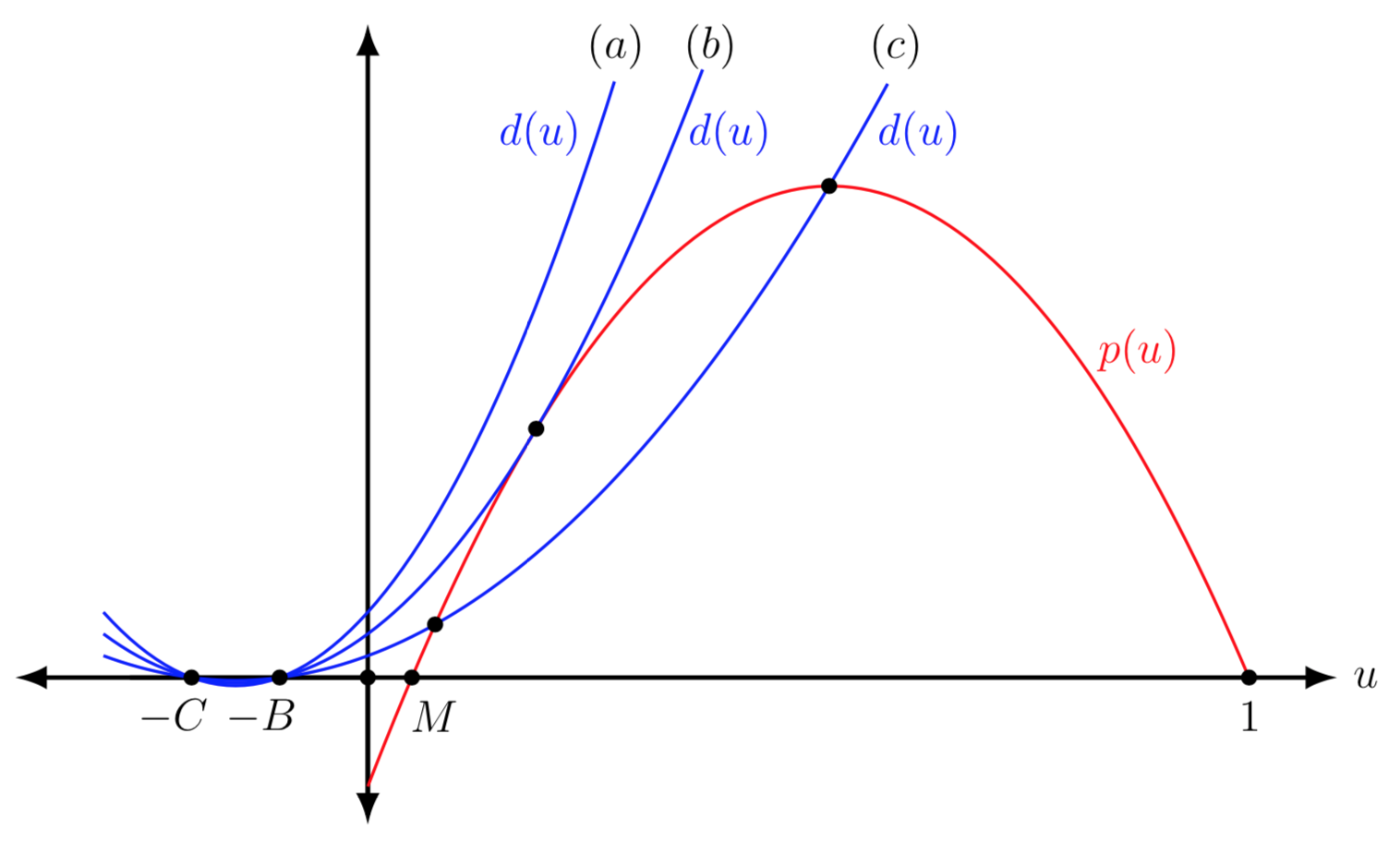}
\caption{The intersections of the functions $p(u)$ (red line) and $d(u)$ (blue lines) for three different possible cases: (a) If $\Delta<0$ \eqref{delta} then $p(u)$ and $d(u)$ do not intersect, and \eqref{eq:05} does not have positive equilibrium points; (b) If $\Delta=0$ then $p(u)$ and $d(u)$ intersect in one point, and \eqref{eq:05} has a unique positive equilibrium point; (c) If $\Delta>0$ then $p(u)$ and $d(u)$ intersect in two points, and \eqref{eq:05} has two distinct positive equilibrium points.}
\label{Fig2}
\end{figure}
The solutions of the equation \eqref{eq:06} are given by
\begin{align}\label{delta}
\begin{aligned}
u_{1,2}=\dfrac{1}{2(1+Q)}\left(1+M-Q(B+C)\pm\sqrt{\Delta}\right)\, \quad\text{with}\,\\
\Delta=(1+M-Q(B+C))^2-4(M+BCQ)(1+Q),
\end{aligned}
\end{align}
such that $M<u_1\leq u_3\leq u_2<1$, where $u_3=(1+M-Q(B+C))/(2(1+Q))$. That is, if \eqref{eq:06} has two real-valued solutions then these solutions are in the interval $(M,1)$.

Varying the parameters $Q$ and $C$ modifies the value of $\Delta$ and hence the number of equilibrium points in the first quadrant. Specifically:
\begin{enumerate}[(a)]
\item System \eqref{eq:05} has no positive equilibrium points if $\Delta<0$;
\item System \eqref{eq:05} has two positive equilibrium points $P_{1,2}=(u_{1,2},u_{1,2}+C)$ if $\Delta>0$; and
\item System \eqref{eq:05} has one positive equilibrium point $P_3=((u_3,u_3+C))$ (order two) if $\Delta=0$.
\end{enumerate}

\section{Main Results}\label{result}
In this section, we discuss the stability of the equilibrium points and their bifurcations.
\begin{theorem}\label{bounded}
The region $\Phi=\{(u,v),\ 0\leq u\leq1,\ 0\leq v\leq 1+C\}$ is an invariant region and attracts all trajectories starting in the first quadrant.
\end{theorem}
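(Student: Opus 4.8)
The plan is to show that the boundary of $\Phi$ is never crossed outward by any trajectory, and that trajectories starting outside $\Phi$ (but in the first quadrant) must enter it. First I would verify invariance of the coordinate axes: since system \eqref{eq:05} is of Kolmogorov type, $u\equiv 0$ and $v\equiv 0$ are invariant, so no trajectory from the open first quadrant can reach the axes in finite time, and the first quadrant $\breve\Omega$ is itself forward invariant. This reduces the problem to controlling the ``upper'' and ``right'' parts of $\partial\Phi$, namely the lines $u=1$ and $v=1+C$.

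Next I would examine the sign of $du/d\tau$ on the segment $u=1$. There, $u(u+C)>0$ and the bracket becomes $(1-M)(1-1)-Q(1+B)v = -Q(1+B)v \le 0$, which is strictly negative for $v>0$; hence $du/d\tau\le 0$ on $u=1$, so the vector field points into (or along) $\Phi$ and the line $u=1$ cannot be crossed from left to right. Similarly, for any trajectory with $u(\tau)\le 1$ and $u>0$, the logistic-type factor in $dv/d\tau$, namely $S v (u+B)(u - v + C)$, is negative as soon as $v > u + C$; in particular on $v = 1+C$ we have $u - v + C = u - 1 \le 0$, so $dv/d\tau \le 0$ there. Thus the top edge $v=1+C$ also cannot be crossed outward once $u\le1$. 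Combining these two observations with axis-invariance shows $\Phi$ is invariant.

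For the attraction claim I would argue in two stages. For the $u$-coordinate: if $u(\tau_0)>1$ for some $\tau_0$, then as long as $u>1$ we have $(u-M)(1-u)<0$ and the whole bracket in $du/d\tau$ is negative, so $u$ is strictly decreasing; a standard comparison argument (comparing with the scalar ODE obtained by dropping the $-Q(u+B)v$ term, which only makes $du/d\tau$ more negative) shows $u(\tau)$ decreases below $1$ in finite time, after which it stays $\le 1$ by the invariance argument above. Once $u\le 1$ for all large $\tau$, I would treat the $v$-coordinate: for $v> u+C$ (in particular for $v>1+C$) we have $dv/d\tau<0$, and a comparison with $dv/d\tau \le Sv(u+B)(1+C-v)$ (using $u\le1$) forces $v$ to enter and remain in $[0,1+C]$. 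Hence every trajectory starting in the first quadrant eventually enters $\Phi$.

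The main obstacle is making the ``finite-time entry'' rigorous rather than merely showing monotone decrease toward the boundary: one must rule out the possibility that $u(\tau)\to 1^+$ asymptotically without ever reaching $\Phi$, and likewise for $v$. This is handled by the comparison argument — on the region $u\ge 1+\varepsilon$ the derivative $du/d\tau$ is bounded above by a strictly negative constant depending on $\varepsilon$ (using that $v\ge 0$ and the cubic $(u-M)(1-u)(u+C)\le -\varepsilon\delta$ for some $\delta>0$ there), so $u$ cannot linger; an analogous uniform bound handles $v$ on $v\ge 1+C+\varepsilon$ once $u\le 1$. Everything else is elementary sign-checking of the vector field on $\partial\Phi$.
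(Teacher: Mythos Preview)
Your argument is correct and establishes the result. The approach differs from the paper's, though both are elementary. The paper proceeds by partitioning the exterior of $\Phi$ into auxiliary regions $\Lambda=\{u>1,\ 0<v<u+C\}$, $\Theta=\{u>1,\ v\ge u+C\}$, and $\Gamma\setminus\Phi$ (where $\Gamma$ is a larger invariant box borrowed from an earlier paper), and then tracks which region a trajectory can move to from each of these by checking the signs of $du/d\tau$ and $dv/d\tau$ in each region. Your argument instead checks the vector field directly on the edges $u=1$ and $v=1+C$ for invariance and then runs a two-stage monotonicity/comparison argument ($u$ first, then $v$) for attraction. Your route is self-contained and arguably cleaner, since it does not appeal to an external reference for the intermediate region~$\Gamma$; the paper's decomposition, on the other hand, makes the global flow picture (Figure~\ref{Fig3}) more explicit.

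One small remark on your last paragraph: the comparison with the scalar ODE $u'=u(u+C)(u-M)(1-u)$ alone does \emph{not} give finite-time crossing of $u=1$, since that scalar flow has $u=1$ as an equilibrium and only yields $u(\tau)\to 1^+$. The clean way to close this (if you want genuine finite-time entry rather than merely $\omega$-limit containment) is to observe that $v$ is bounded below by a positive constant along the trajectory while $u>1$---indeed $dv/d\tau>0$ whenever $0<v<u+C$, so $v$ cannot drift to zero---and then the $-Q(u+B)v$ term keeps $du/d\tau$ bounded away from zero near $u=1$, forcing the crossing. The paper's proof does not address this point either, so it is not a discrepancy, but your stated concern about ``lingering'' is legitimate and deserves this extra sentence.
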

\begin{proof}
We follow the proof of \cite{arancibia3} where a Holling--Tanner model with strong Allee effect is studied. The main difference between the system studied in \cite{arancibia3} and system \eqref{eq:05} is that the equilibrium points are located in $\Phi=\{(u,v),\ 0\leq u\leq1,\ 0\leq v\leq1+C\}$. However, the invariant region $\Gamma$ is the same invariant region showed in \cite{arancibia3} and the system is also a Kolmogorov type. Therefore, trajectories enter into $\Gamma$ and remain in $\Gamma$, see Figure \ref{Fig3}. Moreover, trajectories inside $\Lambda=\{(u,v),\ u>1,\ 0<v<u+C\}$ enter into $\Phi$ or the region $\Theta=\{(u,v),\ u>1,\ v\geq u+C\}$ since $du/d\tau<0$ and $dv/d\tau>0$, see $\Lambda$ and $\Theta$ in Figure \ref{Fig3}. The $u$-component of trajectories in $\Theta$ are non-increasing as time increases and then these trajectories enter into $\Gamma\backslash\Phi$. As a result, all trajectories starting outside $\Gamma$ enter into $\Gamma$ and end up in $\Phi$ since if $u<1+C$, then $dv/d\tau<0$.
\end{proof}

\begin{figure}
\begin{center}
\includegraphics[width=8cm]{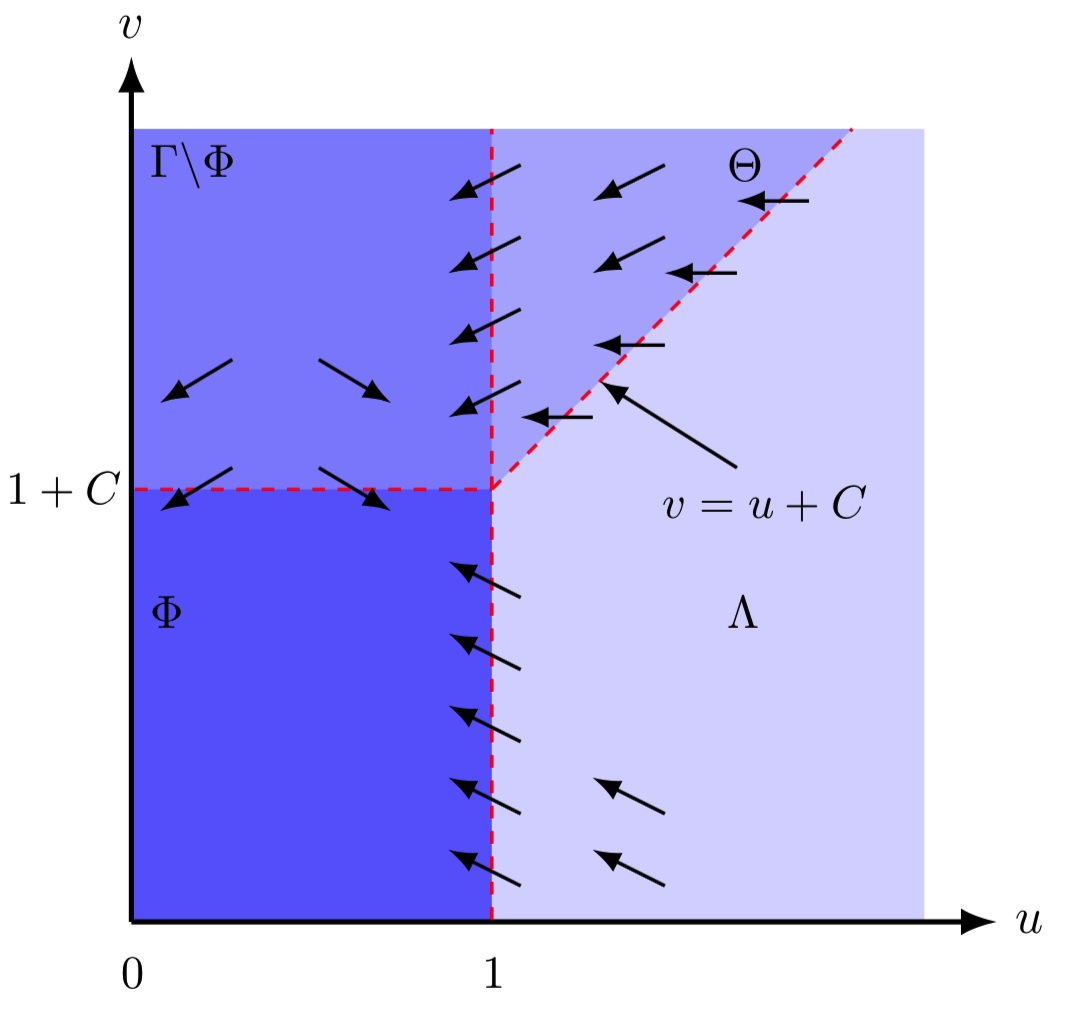}
\end{center}
\caption{Phase plane of system \eqref{eq:05} and its invariant regions $\Phi$ and $\Gamma\backslash\Phi$.}
\label{Fig3}
\end{figure}

\subsection{Nature of equilibrium points}

The Jacobian matrix $J(u,v)$ of system \eqref{eq:05} is
\begin{align}\label{jacobian}
\begin{aligned}
J(u,v)=\begin{pmatrix}
J_{11}(u,v)+J_{12} (u,v) & -uQh(u) \\ 
Sv(B+C+2u-v)  &  S(u+B)(C+u-2v) 
\end{pmatrix},
\end{aligned}
\end{align}
where $J_{11}(u,v)=((1-u)(u-M)-Q(u+B)v)(2u+C)$, $J_{12}(u,v)=(M-2u+1-Qv)(u+C)u$ and $d(u)$ is defined in \eqref{eq:06}.  

\begin{lemma}\label{eqp01}
The equilibrium points $(0,0)$ and $(1,0)$ are saddle points.
\end{lemma}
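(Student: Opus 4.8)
The plan is to evaluate the Jacobian matrix \eqref{jacobian} at each of the two equilibrium points and read off the signs of the eigenvalues directly, exploiting the fact that both points lie on the invariant $u$-axis (i.e.\ $v=0$), which makes the Jacobian lower/upper triangular and hence its eigenvalues the two diagonal entries.

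First I would substitute $v=0$ into \eqref{jacobian}. The off-diagonal entry $Sv(B+C+2u-v)$ in the lower-left position vanishes, so $J(u,0)$ is upper triangular and its eigenvalues are $J_{11}(u,0)+J_{12}(u,0)$ and $S(u+B)(C+u)$. At $(0,0)$: the first diagonal entry becomes $((1-0)(0-M))(0+C) + (M+1)(0+C)\cdot 0 = -MC < 0$, while the second is $S\cdot B\cdot C > 0$; since $M\in(0,1)$ and $B,C,S>0$, the eigenvalues have opposite signs, so $(0,0)$ is a saddle. At $(1,0)$: the first diagonal entry is $((1-1)(1-M))(2+C) + (M-2+1)(1+C)\cdot 1 = (M-1)(1+C) < 0$ because $M<1$, and the second diagonal entry is $S(1+B)(C+1) > 0$; again the eigenvalues have opposite signs, so $(1,0)$ is a saddle. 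Throughout I would use the parameter constraints $(M,B,C,S,Q)\in(0,1)\times\mathbb{R}^4_+$ recorded just before \eqref{eq:05} to guarantee the strict sign conditions (in particular $M\in(0,1)$ so that $-M<0$ and $M-1<0$, and $B,C,S$ strictly positive so that the positive eigenvalue is genuinely positive and the equilibria are hyperbolic).

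There is essentially no obstacle here: the only mild care needed is bookkeeping in the expression $J_{11}+J_{12}$, making sure the $J_{12}$ term is correctly multiplied by the factor $u$ (so that it drops out at $u=0$ but not at $u=1$) and the factor $(u+C)$. One could alternatively avoid the matrix entirely by noting that on the invariant axis $v\equiv 0$ the dynamics reduce to $du/d\tau = u(u+C)(u-M)(1-u)$, whose linearisation at $u=0$ and $u=1$ gives the tangential eigenvalues $-MC$ and $(M-1)(1+C)$ respectively, while the transversal eigenvalue is obtained from $\partial_v(dv/d\tau)\big|_{v=0} = S(u+B)(u+C)$, which is positive at both points; this is the cleanest route and the one I would present.
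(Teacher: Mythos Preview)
Your proposal is correct and follows essentially the same approach as the paper: evaluate the Jacobian at $(0,0)$ and $(1,0)$, observe that the lower-left entry vanishes because $v=0$, and read off the two diagonal eigenvalues, which have opposite signs by the parameter constraints $M\in(0,1)$ and $B,C,S>0$. The paper additionally records the eigenvectors, but these are not needed for the saddle conclusion, and your alternative one-dimensional reduction along the invariant $u$-axis is a perfectly clean way to reach the same result.
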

\begin{proof}
	The Jacobian matrix evaluated at $(0,0)$ gives
	\[ J(0,0)=\begin{pmatrix}
	-CM  & 0 \\ 
	0  &  BCS 
	\end{pmatrix},\]
	with eigenvalues $\lambda_{(0,0)}^1=-CM<0$ and $\lambda_{(0,0)}^2=BCS>0$ and eigenvectors $$\psi_{(0,0)}^1=\begin{pmatrix} 1 & 0 \end{pmatrix}^T~ \text{and}~\psi_{(0,0)}^2=\begin{pmatrix} 0 & 1 \end{pmatrix}^T.$$	
	Similarly, the Jacobian matrix evaluated at $(1,0)$ gives
	\[J(1,0)=\begin{pmatrix}
	(M-1)(C+1)  & -Q(B+1)(C+1) \\ 
	0  &   S(B+1)(C+1)
	\end{pmatrix},\]
	with eigenvalues $\lambda_{(1,0)}^1=S(C+1)(B+1)>0$ and, since $0<M<1$, $\lambda_{(1,0)}^2=(M-1)(C+1)<0$. The associated eigenvectors are $$\psi_{(1,0)}^1=\begin{pmatrix} -Q(B+1)/S(B+1)+1-M & 1 \end{pmatrix}^T~\text{and}~\psi_{(1,0)}^2=\begin{pmatrix} 1 & 0 \end{pmatrix}^T.$$
	Thus, it follows that $(0,0)$ and $(1,0)$ are a saddle points in system \eqref{eq:05}.
\end{proof}

\begin{lemma}\label{eqpM}
The equilibrium point $(M,0)$ is a repeller.
\end{lemma}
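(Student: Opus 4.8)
The plan is to carry out a direct linear-stability analysis at $(M,0)$ using the Jacobian \eqref{jacobian}. First I would substitute $(u,v)=(M,0)$ into $J(u,v)$. Setting $v=0$ annihilates the lower-left entry $Sv(B+C+2u-v)$, so the linearisation is triangular; setting $u=M$ makes the factor $(1-u)(u-M)$ vanish, so that $J_{11}(M,0)=0$ and the $(1,1)$-entry reduces to $J_{12}(M,0)=M(1-M)(M+C)$. Evaluating the remaining two entries (the $v$-derivative of the first equation, and the $(2,2)$-entry $S(u+B)(C+u-2v)$) at $(M,0)$ then gives
\[
J(M,0)=\begin{pmatrix} M(1-M)(M+C) & -MQ(M+C)(M+B) \\ 0 & S(M+B)(M+C) \end{pmatrix}.
\]

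Because this matrix is triangular, its eigenvalues are the diagonal entries $\lambda_{(M,0)}^1=M(1-M)(M+C)$ and $\lambda_{(M,0)}^2=S(M+B)(M+C)$, with associated eigenvectors $\psi_{(M,0)}^1=\begin{pmatrix}1 & 0\end{pmatrix}^T$ and a transversal one for $\lambda_{(M,0)}^2$. The only thing left is to check signs: since $M\in(0,1)$ and $B,C,S\in\mathbb{R}_+$, both eigenvalues are strictly positive, so $(M,0)$ is an unstable node, i.e.\ a repeller, and the lemma follows.

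There is no real obstacle here — the computation is routine precisely because $(M,0)$ lies on the invariant axis $v=0$, which forces the Jacobian to be triangular. The only degenerate case worth a word is $M(1-M)=S(M+B)$, for which $\lambda_{(M,0)}^1=\lambda_{(M,0)}^2$: in that situation $(M,0)$ is still a repeller (an unstable star or improper node), since both eigenvalues remain strictly positive, so the conclusion is unaffected. Finally, since \eqref{eq:05} and \eqref{eq:04} are related through the orientation-preserving diffeomorphism $\varphi$ of Section \ref{model}, the repelling character of $(M,0)$ transfers to the corresponding equilibrium of the original model \eqref{eq:04}.
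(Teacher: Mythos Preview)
Your argument is correct and follows essentially the same route as the paper: evaluate the Jacobian \eqref{jacobian} at $(M,0)$, observe that it is (upper) triangular, read off the two diagonal eigenvalues, and check that both are strictly positive because $M\in(0,1)$ and $B,C,S>0$. Your added remarks on the equal-eigenvalue case and on the diffeomorphism $\varphi$ are fine but not needed for the lemma as stated.
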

\begin{proof}	
	The Jacobian matrix evaluated at $(M,0)$ gives
	\[J(M,0)=\begin{pmatrix}
	-M(M-1)(C+M)  & -MQ(B+M)(C+M) \\ 
	0  &   S(B+M)(C+M)
	\end{pmatrix},\]
	with eigenvalues $\lambda_{(M,0)}^1=M(1-M)(C+M)>0$ and $\lambda_{(M,0)}^2=M(1-M)(C+M)>0$ and eigenvectors $$\psi_{(M,0)}^1=\begin{pmatrix} MQ(B+M)/(M(1-M)-S(B+M)) & 0 \end{pmatrix}^T~\text{and}~\psi_{(M,0)}^2=\begin{pmatrix} 1 & 0 \end{pmatrix}^T.$$
	It follows that $(M,0)$ is a hyperbolic repeller in system \eqref{eq:05}.
\end{proof}

\begin{lemma}\label{eqpC}
If $\Delta\geq0$ \eqref{delta}, then the equilibrium point $(0,C)$ is a local attractor. Moreover, if $\Delta<0$ \eqref{delta}, then $(0,C)$ is a global attractor (for positive initial conditions).
\end{lemma}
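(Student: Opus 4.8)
The plan is to treat the two assertions separately. The local attractor claim follows by linearisation and in fact holds for all parameter values; the global attractor claim for $\Delta<0$ is where the work lies, and it rests on the Poincar\'e--Bendixson theorem together with the fact that $\Delta<0$ rules out equilibria in the open first quadrant.

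For the local claim I would evaluate the Jacobian \eqref{jacobian} at $(0,C)$. Every term in the $(1,2)$-entry carries a factor $u$, so $J(0,C)$ is lower triangular, and a direct computation gives diagonal entries $-C(M+BCQ)$ and $-SBC$. Since $(M,B,C,S,Q)\in\Pi$, both are strictly negative, so $(0,C)$ is a hyperbolic stable node and hence a local attractor; note this uses nothing about $\Delta$, the hypothesis $\Delta\ge0$ merely keeping the statement disjoint from the global one.

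For the global claim assume $\Delta<0$. By Theorem \ref{bounded} every forward orbit enters the compact invariant set $\Phi$, hence is bounded. Because \eqref{eq:05} is of Kolmogorov type the coordinate axes are invariant, so an orbit with $u(0)>0$, $v(0)>0$ stays in the open first quadrant. On $\{v=0\}$ the flow is governed by $du/d\tau=u(u+C)(u-M)(1-u)$, so $(M,0)$ repels along the axis while $(0,0)$ and $(1,0)$ attract along it; on $\{u=0\}$ the logistic equation $dv/d\tau=SBv(C-v)$ drives every positive point to $(0,C)$. Combining this with Lemmas \ref{eqp01} and \ref{eqpM}: $(0,0)$ and $(1,0)$ are saddles whose stable manifolds are contained in $\{v=0\}$, and $(M,0)$ is a repeller. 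Now take an interior orbit and let $\omega$ be its $\omega$-limit set, a non-empty, compact, connected, invariant subset of $\Phi$. Since $\Delta<0$ there is no equilibrium in the open first quadrant, so by Poincar\'e--Bendixson $\omega$ is a single equilibrium, a periodic orbit, or a graphic built from the boundary equilibria and orbits connecting them. A periodic orbit cannot touch the invariant axes, so it would lie entirely in the open first quadrant, where it must enclose at least one equilibrium; but $\Delta<0$ leaves the open first quadrant free of equilibria, so closed orbits are ruled out. A graphic is impossible because each equilibrium it uses needs both an incoming and an outgoing connecting orbit inside $\omega$: the repeller $(M,0)$ admits no incoming orbit, the attractor $(0,C)$ admits no outgoing orbit, and there is no orbit joining $(0,0)$ and $(1,0)$ since both of their stable manifolds lie in $\{v=0\}$ and hence miss the open first quadrant. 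Therefore $\omega$ is a single equilibrium; it cannot be the repeller $(M,0)$, nor $(0,0)$ or $(1,0)$ because an interior orbit never meets their stable manifolds, so $\omega=\{(0,C)\}$. As the interior orbit was arbitrary, $(0,C)$ attracts every orbit with positive initial conditions, which is the global attractor claim.

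The step I expect to be the main obstacle is the Poincar\'e--Bendixson bookkeeping: ruling out closed orbits via the index theorem (after checking that such an orbit must sit in the open first quadrant), and, more delicately, excluding every invariant graphic by enumerating the possible heteroclinic connections among the four boundary equilibria from their stable and unstable manifold structure.
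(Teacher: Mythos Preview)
Your argument is correct and follows the same strategy as the paper: linearisation at $(0,C)$ for the local claim, and boundedness (Theorem~\ref{bounded}) plus Poincar\'e--Bendixson for the global one. The paper's own proof is considerably terser---it simply observes that $(0,C)$ is the only stable equilibrium in $\Phi$ and invokes Poincar\'e--Bendixson---whereas you explicitly rule out periodic orbits via the index argument and exclude graphics by analysing the stable and unstable manifolds of the boundary equilibria; your version is thus a more complete justification of the same idea.
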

\begin{proof}
	The Jacobian matrix evaluated in the point $(0,C)$ is
	\[J(0,C)=\begin{pmatrix}
	-C(M+BQC)  & 0 \\ 
	BCS  &   -BCS
	\end{pmatrix},\]
	with eigenvalues $\lambda_{(0,C)}^1=-C(BCQ+M)<0$ and $\lambda_{(0,C)}^2=-BCS<0$ and eigenvectors $\psi_{(0,C)}^1=\begin{pmatrix} -(M+B(CQ-S))/BS & 1 \end{pmatrix}^T$ and $\psi_{(0,C)}^2=\begin{pmatrix} 0 & 1 \end{pmatrix}^T$. It follows that $(0,C)$ is local attractor in system \eqref{eq:05}.
Moreover, if $\Delta<0$ \eqref{delta}, then $(0,C)$ is the only stable equilibrium point in $\Phi$. Hence, by the Poincar\'e--Bendixson Theorem $(0,C)$ is the unique $\omega$-limit for all trajectories starting in the first quadrant, since by Theorem \ref{bounded} all positive solutions are bounded and eventually end up in $\Gamma$, see Figure \ref{Fig4}.
\end{proof}
\begin{figure}
\centering
\includegraphics[width=9cm]{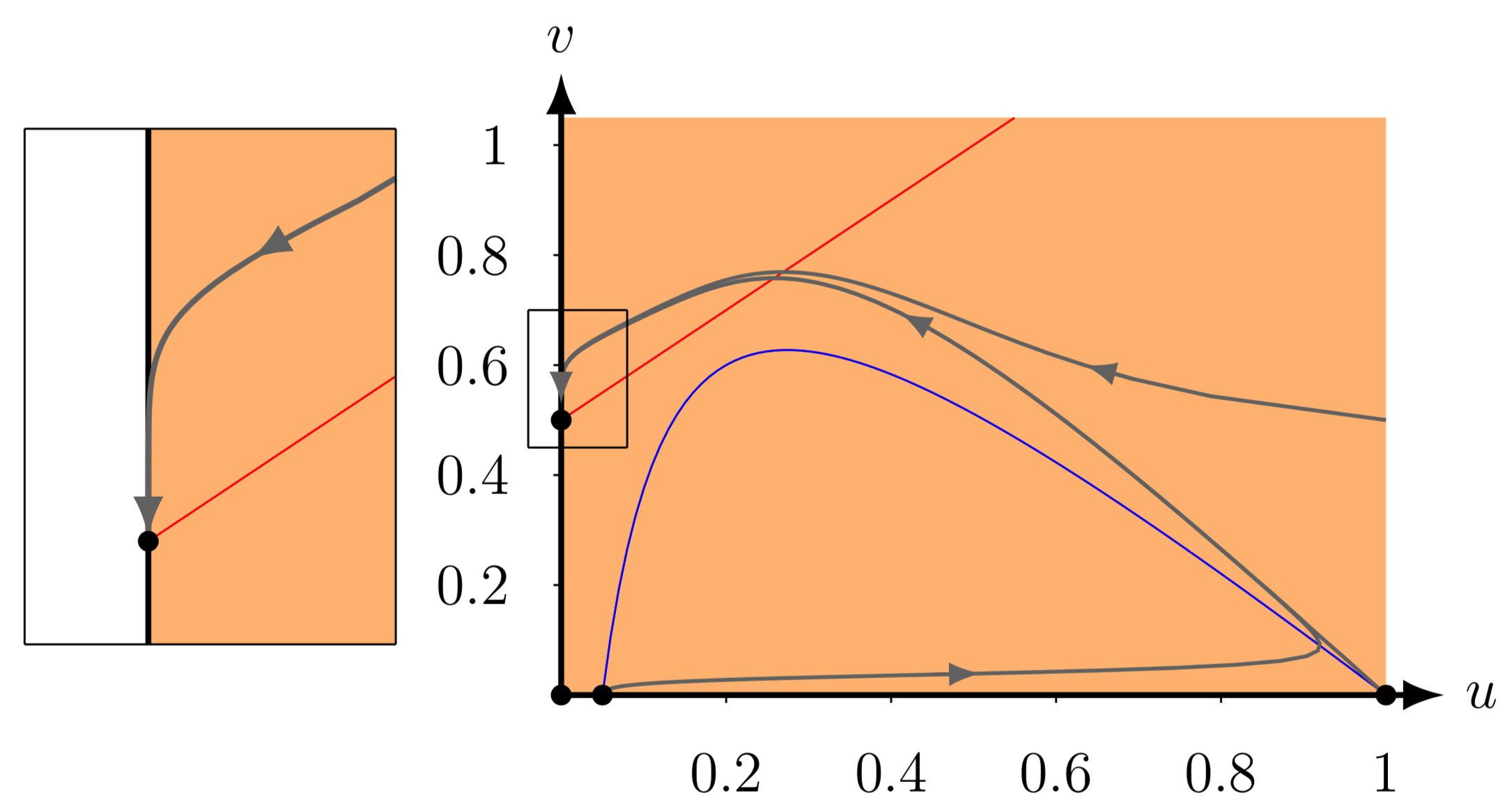}
\caption{For $M=0.05$, $B=0.05$, $C=0.5$, $Q=0.8$, and $S=0.175$, such that $\Delta<0$ \eqref{delta}, the equilibrium point $(0,C)$ is a global attractor for trajectories starting in the first quadrant. The blue (red) curve represents the prey (predator) nullcline.}
\label{Fig4}
\end{figure}

Next, we consider system parameters values such that system \eqref{eq:05} has two equilibrium points in the first quadrant, that is, we assume $\Delta>0$ \eqref{delta}. These equilibrium points lie on the line $v=u+C$ such that $Qh(u)=g(u)$ \eqref{eq:06} and $J_{11}=0$ \eqref{jacobian}. Hence, the Jacobian matrix \eqref{jacobian} at these equilibrium points simplifies to
\begin{equation}\label{J}
J(u_{i},u_{i}+C)=\begin{pmatrix}
J_{12}(u_i,u_i+C)  & -Qu_{i}(u_{i}+B)(u_{i}+C) \\ 
S(u_{i}+B)(u_{i}+C)  &  -S(u_{i}+B)(u_{i}+C) 
\end{pmatrix},\end{equation}
with $J_{12}(u_i,u_i+C)=(M-2u_i+1-Q(u_i+C))(u_i+C)u_i$, $i=1,2$ and $u_i$ given in \eqref{delta}. The determinant and the trace of the Jacobian  matrix \eqref{J} are:\\
\[\begin{aligned}
\det(J(u_{i},u_{i}+C))=&Su_{i}(u_{i}+B)(u_{i}+C)^2(-M+2u_{i}(1+Q)\\
&-1+Q(B+C)),\\
\tr(J(u_{i},u_{i}+C))=&(u_{i}+B)(u_{i}+C)\left(f(u_{i})-C\right),
\end{aligned}\]
where
\begin{align}\label{function}
\begin{aligned}
f(u_{i})=\dfrac{(u_{i}(M-2u_{i}-Qu_{i}+1)-S(B+u_{i}))}{u_{i}Q}.
\end{aligned}
\end{align}
Thus, the sign of the determinant depends on the sign of $-M+2u_{i}(1+Q)-1+Q(B+C)$ and the sign of the trace depends on the sign of $f(u_{i})-C$. Moreover, the eigenvalues of the Jacobian matrix of system \eqref{eq:05} evaluate at $P_1=(u_1,u_1+C)$ are 
\[\lambda_{P_1}^{1,2}=-\dfrac{(u_1+C)\left(BS+u_1(-1-M+S+CQ+u_1(2+Q))\right)\pm \sqrt{p_1(u_1)}}{2}\]
and eigenvectors
\[\psi_{P_1}^{1,2}=\begin{pmatrix} \dfrac{BS+u_1(1+M+S-CQ-u_1(2+Q))\pm \sqrt{p_2(u_1)}}{2S(u_1+B)} \\ 1 \end{pmatrix}\] 
with 
\begin{align*}
p_1(u_1)=&-4S(u_1+B)(u_1BQ+u_1(-1-M+CQ+u_1Q\\
&+u_1(2+Q)))+(BS+u_1(-1-M+S+CQ+(2+Q)u))^2,\\
p_2(u_1)=&(B^2S(-4Qu_1+S)+2BS(1+M+S-CQ-4Qu_1)u_1\\
&+((1+M-CQ)^2-2(-1-M+CQ+B(2+Q)+2Qu_1)S\\
&+S^2)u_1^2- 2(2 + Q)(1+M+S-CQ)u_1^3+(2+Q)^2u_1^4).
\end{align*} 
Note that the first element of $\psi_{P_1}^{1}>0$ since $1+M-CQ>u_1(2+Q)$. Similarly, it turns out that $\psi_{P_1}^{2}>0$. This gives the following results. 
\begin{lemma}\label{p1}
If $\Delta>0$ \eqref{delta}, then the equilibrium point $P_1$ is a saddle point.
\end{lemma}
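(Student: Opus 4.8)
The plan is to show that $P_1 = (u_1, u_1+C)$ is a saddle by establishing that $\det(J(u_1,u_1+C)) < 0$, which for a $2\times 2$ matrix is exactly the statement that the two eigenvalues are real with opposite signs, i.e. that the equilibrium is a (hyperbolic) saddle point. From the determinant formula already computed in the excerpt,
\[
\det(J(u_1,u_1+C)) = S u_1 (u_1+B)(u_1+C)^2\bigl(-M + 2u_1(1+Q) - 1 + Q(B+C)\bigr),
\]
and since $S, u_1, u_1+B, (u_1+C)^2$ are all strictly positive (recall $M < u_1 < 1$ and $B, C, S, Q > 0$), the sign of the determinant is the sign of the last factor $\Psi(u_1) := 2u_1(1+Q) - (1+M) + Q(B+C)$.

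The key step is therefore to check that $\Psi(u_1) < 0$. Here I would use the explicit root formula \eqref{delta}: since $u_1 = \tfrac{1}{2(1+Q)}\bigl(1+M-Q(B+C) - \sqrt{\Delta}\bigr)$ is the \emph{smaller} root, we get
\[
2u_1(1+Q) = 1 + M - Q(B+C) - \sqrt{\Delta},
\]
so $\Psi(u_1) = 2u_1(1+Q) - (1+M) + Q(B+C) = -\sqrt{\Delta} < 0$ whenever $\Delta > 0$. Hence $\det(J(u_1,u_1+C)) < 0$, the eigenvalues are real and of opposite sign, and $P_1$ is a saddle point. (Equivalently, one can observe that $\Psi$ is increasing in $u$ and vanishes precisely at $u_3 = (1+M-Q(B+C))/(2(1+Q))$, the midpoint between the two roots, so $\Psi(u_1) < 0 < \Psi(u_2)$ since $u_1 < u_3 < u_2$; this also foreshadows that $P_2$ has positive determinant.)

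I do not expect a serious obstacle here: the argument is essentially a one-line sign computation once the determinant is written in terms of $\Psi(u_1)$ and the root formula is substituted. The only point requiring a little care is confirming the positivity of all the remaining factors, in particular that $u_1 > 0$, which follows from $u_1 > M > 0$ as recorded after \eqref{delta}; and noting that $\det < 0$ already rules out a degenerate (zero-eigenvalue) case, so no separate center-manifold or higher-order analysis is needed. The eigenvalue/eigenvector expressions displayed before the lemma, and the observed positivity of the components of $\psi_{P_1}^{1,2}$, are consistent with this conclusion and could be cited as a cross-check, but the determinant sign alone suffices.
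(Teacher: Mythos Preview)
Your proposal is correct and follows essentially the same approach as the paper: both evaluate the sign-determining factor $-M+2u_1(1+Q)-1+Q(B+C)$ using the explicit root formula to obtain $-\sqrt{\Delta}<0$, concluding $\det(J(P_1))<0$ and hence that $P_1$ is a saddle. Your added remark that this factor is increasing and vanishes at $u_3$ (so is positive at $u_2$) is a nice extra observation that anticipates the companion result for $P_2$.
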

\begin{proof}
	Evaluating $-M+2u(1+Q)-1+Q(B+C)$ at $u_1$ gives:\\
	\[\begin{aligned}
	-M+2u_1(1+Q)-1+Q(B+C) &=-\sqrt{\Delta}<0.
	\end{aligned}\]
	Hence $\det(J(P_1))<0$ and $P_1$ is thus a saddle point, see Figure \ref{Fig5}.
\end{proof}
\begin{lemma}\label{p2}
If $\Delta>0$ \eqref{delta}, then the equilibrium point $P_2$ is:
	\begin{enumerate}\renewcommand{\labelenumi}{(\roman{enumi})}
	\item a repeller if $0<C<C_{H}=f(u_2)$; and
	\item an attractor if $C>C_{H}$,
\end{enumerate}
with $f$ defined in \eqref{function}.
\end{lemma}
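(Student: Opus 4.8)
The plan is to analyze the sign of $\det(J(P_2))$ and $\tr(J(P_2))$ using the simplified Jacobian \eqref{J}, since $P_2$ is hyperbolic generically and its type (repeller versus attractor) is determined by these two quantities together with the fact that the determinant turns out to be positive. First I would evaluate the quantity $-M+2u(1+Q)-1+Q(B+C)$ at $u=u_2$. By the same computation as in the proof of Lemma \ref{p1}, but now picking the $+\sqrt{\Delta}$ root, this equals $+\sqrt{\Delta}>0$. Hence $\det(J(P_2))=Su_2(u_2+B)(u_2+C)^2\sqrt{\Delta}>0$, so $P_2$ is either a repeller (if $\tr(J(P_2))>0$) or an attractor (if $\tr(J(P_2))<0$); it cannot be a saddle.

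Next I would examine $\tr(J(P_2))=(u_2+B)(u_2+C)(f(u_2)-C)$. Since $u_2>M>0$, $B>0$ and $C>0$, the prefactor $(u_2+B)(u_2+C)$ is strictly positive, so the sign of the trace is exactly the sign of $f(u_2)-C$. Therefore $\tr(J(P_2))>0 \iff C<f(u_2)=:C_H$ and $\tr(J(P_2))<0 \iff C>C_H$. Combining with $\det(J(P_2))>0$: if $0<C<C_H$ then $P_2$ has two eigenvalues with positive real part, i.e.\ $P_2$ is a repeller; if $C>C_H$ then $P_2$ has two eigenvalues with negative real part, i.e.\ $P_2$ is an attractor. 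This gives statements (i) and (ii). One should also note the consistency requirement that $C_H=f(u_2)>0$ for the bifurcation threshold to lie in the admissible parameter range $C>0$; this can be checked directly from \eqref{function} on the set where $\Delta>0$, or simply stated as the regime in which case (i) is nonempty.

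The main obstacle I anticipate is not the determinant computation, which is immediate from the $\pm\sqrt{\Delta}$ structure already exploited for $P_1$, but rather making the trace analysis clean: one must confirm that $f(u_2)$ as given in \eqref{function} is well-defined (the denominator $u_2 Q$ is nonzero since $u_2>M>0$ and $Q>0$) and, for the statement to be meaningful, that the threshold $C_H=f(u_2)$ is positive for the relevant parameter values, so that the interval $(0,C_H)$ in part (i) is genuinely nonempty. If the positivity of $f(u_2)$ requires an extra parameter restriction, I would either fold it into the hypotheses or argue that on the $\Delta>0$ locus one has $u_2<1$ close enough to $1$ that $u_2(M-2u_2-Qu_2+1)-S(B+u_2)$ can still be positive — but the cleanest route is to simply present (i) and (ii) as a dichotomy governed by the sign of $f(u_2)-C$, with $C_H=f(u_2)$ named as the Hopf threshold (the value at which $\tr(J(P_2))=0$ while $\det(J(P_2))>0$), deferring the genuine Hopf bifurcation analysis (transversality, first Lyapunov coefficient) to a later result. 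Finally I would remark that the borderline case $C=C_H$ gives a center-type linearization and is precisely where a Hopf bifurcation is expected, which motivates the notation $C_H$ and connects to the subsequent bifurcation discussion.
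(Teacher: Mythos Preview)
Your proposal is correct and follows essentially the same approach as the paper: compute $-M+2u_2(1+Q)-1+Q(B+C)=+\sqrt{\Delta}>0$ to get $\det(J(P_2))>0$, then read off the sign of the trace from the sign of $f(u_2)-C$. Your write-up is in fact more complete than the paper's, which omits the explicit observation that the prefactor $(u_2+B)(u_2+C)>0$ and does not discuss the well-definedness of $f(u_2)$ or the positivity of $C_H$; your remarks on those points and on the borderline $C=C_H$ case are sound additions.
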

\begin{proof}
	Evaluating $-M+2u(1+Q)-1+Q(B+C)$ at $u_2$ gives:\\
	\[\begin{aligned}
	-M+2u_2(1+Q)-1+Q(B+C) =\sqrt{\Delta}>0.
	\end{aligned}\]
	Hence $\det(J(P_2))>0$. Evaluating $f(u)-C$ at $u=u_2$ gives
	\[f(u_2)-C=\dfrac{(u_2(M-2u_2-Qu_2+1)-S(B+u_2))}{u_2Q}-C_{H}.\]
	Therefore, the sign of the trace, and thus the behaviour of $P_2$, depends on the parity of  $f(u_2)-C_{H}$, see Figure \ref{Fig5}.
\end{proof} 
If $\Delta>0$ and $C>C_{H}$, then system \eqref{eq:05} has two stable equilibrium points $(0,C)$ and $P_2$. Furthermore, if $C=C_{H}$, then $\tr(J(P_2))=0$ and 
$P_2$ undergoes a Hopf bifurcation \cite{chicone}.
\begin{figure}
\centering
\includegraphics[width=12cm]{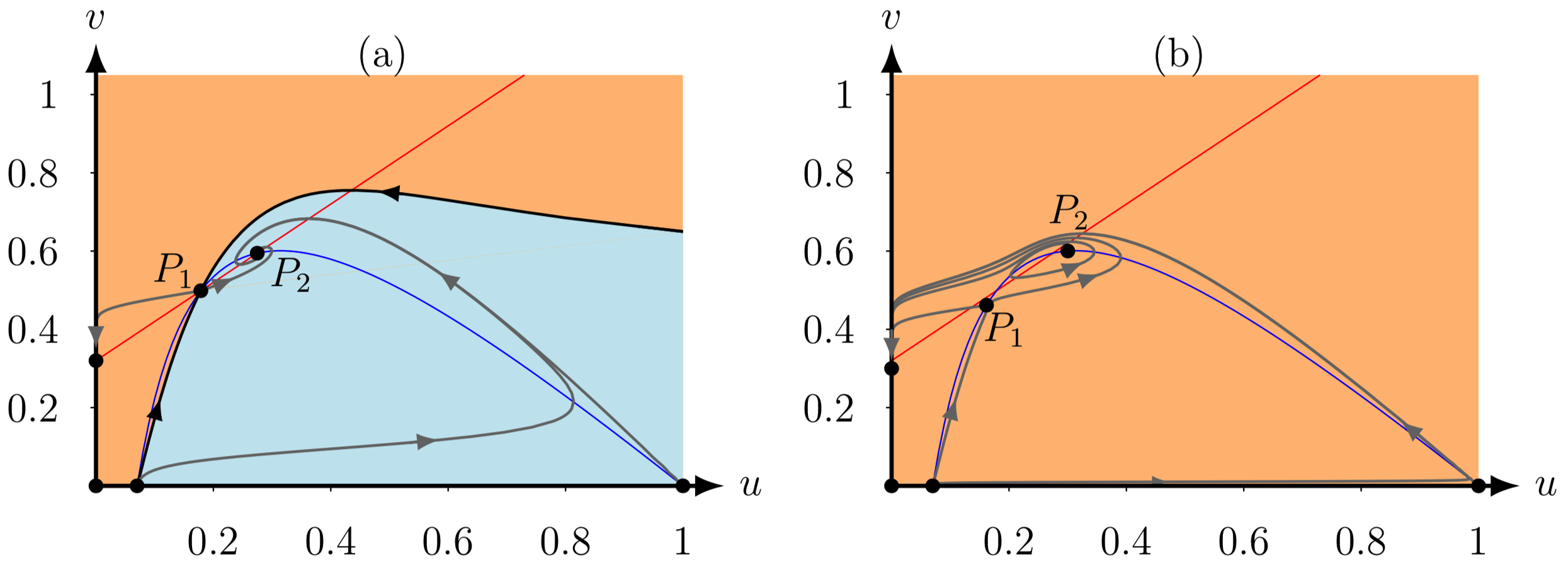}
\caption{Let the system parameter $(M,B,C,Q)=(0.07,0.0645,0.32,0.736)$ be such that $\Delta>0$ \eqref{delta}. (a) If $S=0.15$ such that $C<C_{H}$, then the equilibrium point $P_2$ is stable. (b) If $S=0.05$ such that $C>C_{H}$, then the equilibrium point $P_2$ is unstable. The blue (red) curve represents the prey (predator) nullcline. The orange (light blue) region represents the basin of attraction of the equilibrium point $(0,C)$ ($P_2$). Note that the same color conventions are used in the upcoming figures.}
\label{Fig5}
\end{figure}

Finally, if $\Delta=0$ \eqref{delta} then the equilibrium points $P_1$ and $P_2$ collapse and system \eqref{eq:05} has a unique equilibrium point in the first quadrant.

\begin{lemma}\label{p1=p2}
If $\Delta=0$ \eqref{delta}, then the equilibrium point $P_3$ is:
	\begin{enumerate}\renewcommand{\labelenumi}{(\roman{enumi})}
	\item a saddle-node attractor if $C>C_{SN}=f(u_3)$; and
	\item a saddle-node repeller if $C<C_{SN}$,
\end{enumerate}
with $f$ defined in \eqref{function}.
\end{lemma}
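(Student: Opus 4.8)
\emph{Proof proposal.} The plan is to show that at $\Delta=0$ the Jacobian $J(P_3)$ has a simple zero eigenvalue, to reduce the dynamics near $P_3$ to a one-dimensional centre manifold, and to read off the local phase portrait from the sign of the quadratic coefficient of the reduced vector field together with the sign of the remaining nonzero eigenvalue.

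First I would specialise the formulas following \eqref{J}. When $\Delta=0$ we have $u_1=u_2=u_3$ and $-M+2u_3(1+Q)-1+Q(B+C)=\pm\sqrt{\Delta}=0$, so $\det\big(J(P_3)\big)=0$ and $0$ is an eigenvalue of $J(P_3)$. The second eigenvalue equals $\tr\big(J(P_3)\big)=(u_3+B)(u_3+C)\big(f(u_3)-C\big)$, which is nonzero precisely when $C\neq C_{SN}=f(u_3)$; since $u_3\in(M,1)$ and $B,C>0$, it is negative for $C>C_{SN}$ and positive for $C<C_{SN}$. Thus, away from $C=C_{SN}$, the point $P_3$ is non-hyperbolic with a one-dimensional centre subspace and a one-dimensional hyperbolic subspace.

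Next I would translate $P_3$ to the origin and change to the eigenbasis of $J(P_3)$ (the null eigenvector together with the eigenvector for $\lambda:=\tr J(P_3)$), bringing the linear part to Jordan form $\operatorname{diag}(0,\lambda)$ with $\lambda\neq0$. In these coordinates $(X,Y)$ the system reads $\dot X=\mathcal Q(X,Y)+\dots$, $\dot Y=\lambda Y+\dots$ with $\mathcal Q$ quadratic. By the centre manifold theorem there is an invariant curve $Y=h(X)=O(X^2)$ on which $\dot X=a_2X^2+O(X^3)$, and the local phase portrait of the planar system at $P_3$ is that of a saddle-node whenever $a_2\neq0$ (see, e.g., \cite{chicone}): it has one hyperbolic sector and one parabolic sector, the parabolic (node) sector being attracting if $\lambda<0$ and repelling if $\lambda>0$. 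This yields exactly the dichotomy $C>C_{SN}$ versus $C<C_{SN}$ asserted in the statement.

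The remaining, and main, point is to verify $a_2\neq0$. Here I would exploit that at $\Delta=0$ the function $p(u)-Qd(u)$ of \eqref{eq:06} has a genuine double root at $u_3$: it is a quadratic with leading coefficient $-(1+Q)<0$, so $p(u)-Qd(u)=-(1+Q)(u-u_3)^2$. This quadratic tangency of the nullcline-defining functions is precisely the nondegeneracy that forces the fold on the centre manifold to be quadratic rather than of higher order; differentiating the defining relations at $u_3$ shows $a_2$ to be a nonzero multiple of $-(1+Q)$. Equivalently, one may invoke Sotomayor's theorem with $\Delta$ (through $C$) as unfolding parameter: the saddle $P_1$ ($\det J<0$) and the anti-saddle $P_2$ ($\det J>0$) have indices of opposite sign, so their collision as $\Delta\downarrow0$ forces a generic saddle-node. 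Either route confirms that $P_3$ is a saddle-node, attracting for $C>C_{SN}$ and repelling for $C<C_{SN}$.
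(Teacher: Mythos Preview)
Your proposal is correct and, in fact, more complete than the paper's own proof of this lemma. The paper's argument is essentially the first paragraph of yours: it verifies that $-M+2u_3(1+Q)-1+Q(B+C)=0$, hence $\det J(P_3)=0$, and then notes that the sign of $\tr J(P_3)=(u_3+B)(u_3+C)(f(u_3)-C)$ is governed by $f(u_3)-C$. It stops there, referring to a figure for the phase portrait; it does not carry out the centre-manifold reduction or check the quadratic nondegeneracy $a_2\neq0$ that you sketch. The genuine saddle-node structure is only established later in the paper, in a separate theorem that applies Sotomayor's theorem (with $Q$, rather than $C$, as the unfolding parameter). So your approach folds into the lemma what the paper defers to the subsequent bifurcation section; this buys you a self-contained proof of the saddle-node claim, at the cost of some extra computation. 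Your observation that $p(u)-Qd(u)=-(1+Q)(u-u_3)^2$ at $\Delta=0$ is exactly the right ingredient for the nondegeneracy, though the link to the coefficient $a_2$ on the centre manifold is only asserted, not computed; if you want this to stand on its own you should either spell out that computation or simply cite Sotomayor's theorem directly, as the paper eventually does.
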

\begin{proof}
	Evaluating $-M+2u(1+Q)-1+Q(B+C)$ at $u=u_3$ gives:\\
	\[\begin{aligned}
	-M+2u_3(1+Q)-1+Q(B+C) =0.
	\end{aligned}\]
	Hence $\det(J(P_3))=0$. Evaluating $f(u)-C$ at $u=u_3$ gives
	\[f(u_3)-C=\dfrac{(u_3(M-2u_3-Qu_3+1)-S(B+u_3))}{u_3Q}-C.\]
	Therefore, the sign of the trace, and thus the behaviour of $P_3$, depends on the parity of  $f(u_3)-C$, see Figure \ref{Fig6}.
\end{proof}
\begin{figure}
\begin{center}
\includegraphics[width=12cm]{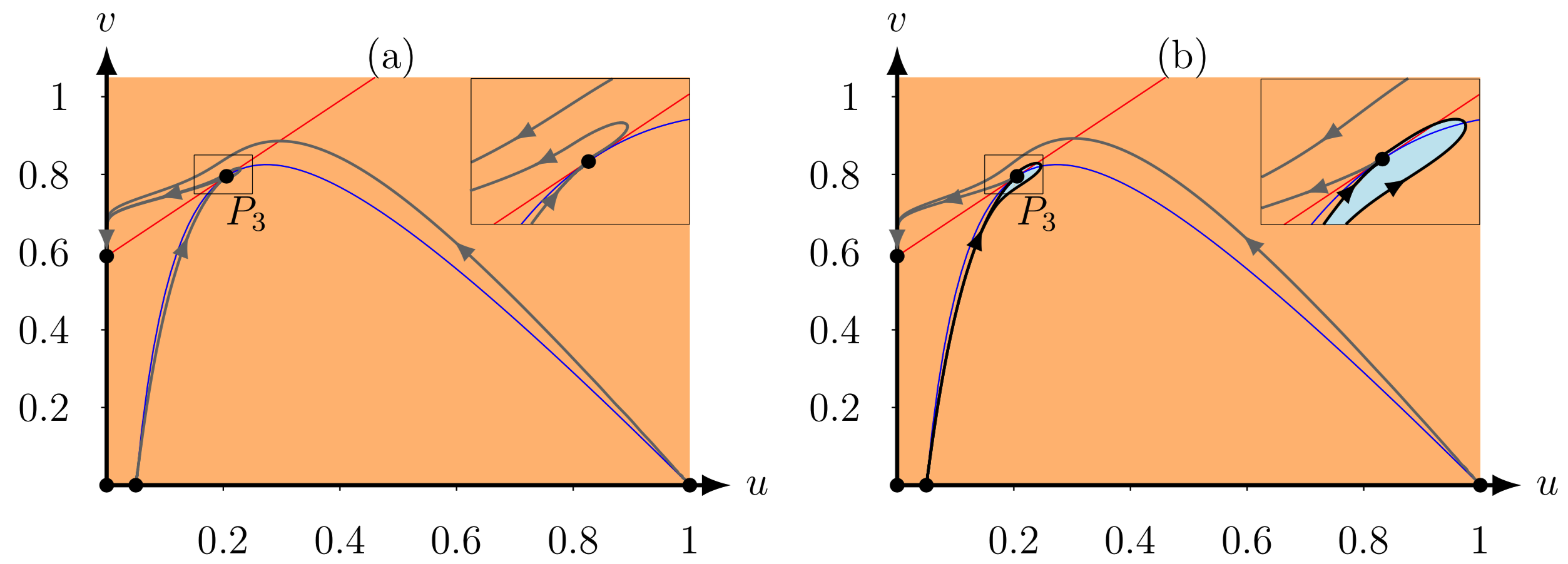}
\caption{If $M=0.05$, $B=0.05$, $S=0.125$ and $Q=0.60821818$, then $\Delta=0$. Therefore, the equilibrium point $P_3$ is (a) a saddle-node repeller if $C>C_{SN}$ and (b) a saddle-node attractor if $C<C_{SN}$.}
\label{Fig6}
\end{center}
\end{figure}

\subsection{Bifurcation analysis}
In this section we present some of the possible bifurcation scenarios when $\Delta=0$ \eqref{delta} in system \eqref{eq:05}.

\begin{theorem}
If $\Delta=0$ \eqref{delta}, then by changing $Q$ system \eqref{eq:05} experiences a saddle-node bifurcation at the equilibrium point $P_3$.
\end{theorem}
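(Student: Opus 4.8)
The plan is to verify the hypotheses of Sotomayor's Theorem for a saddle-node bifurcation \cite{chicone}. Fix the parameters $(M,B,C,S)$ and regard $\Delta$ in \eqref{delta} as a function of $Q$; the hypothesis $\Delta=0$ singles out a critical value $Q=Q_{SN}$ at which $p(u)$ and $d(u)$ are tangent (Figure \ref{Fig2}(b)) and $P_3=(u_3,u_3+C)$ is the unique positive equilibrium of \eqref{eq:05}. Assuming in addition $C\neq C_{SN}=f(u_3)$, with $f$ as in \eqref{function} (the condition already used in Lemma \ref{p1=p2}), the matrix $J(P_3)$ in \eqref{J} with $i=3$ satisfies $\det J(P_3)=0$ but $\tr J(P_3)\neq 0$, so it has a simple zero eigenvalue and rank one.

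First I would compute null vectors. Since the second row of \eqref{J} at $u=u_3$ equals $S(u_3+B)(u_3+C)\,(1,\,-1)$ and $\det J(P_3)=0$ forces $J_{12}(u_3,u_3+C)=Q u_3(u_3+B)(u_3+C)$, a right null vector of $J(P_3)$ is $V=(1,1)^T$ and a left null vector is $W=(S,\,-Q u_3)^T$ (up to scaling). Writing $F=(F_1,F_2)$ for the right-hand side of \eqref{eq:05}, I would then check the two Sotomayor conditions at $(P_3,Q_{SN})$. For transversality with respect to the parameter, $F_Q=(-u(u+C)(u+B)v,\,0)^T$, hence $F_Q(P_3;Q_{SN})=(-u_3(u_3+B)(u_3+C)^2,\,0)^T$ and $W^{T}F_Q(P_3;Q_{SN})=-S u_3(u_3+B)(u_3+C)^2\neq 0$ because $0<M<u_3<1$ and $B,C,S>0$; this is the geometric statement that $Q$ crosses $Q_{SN}$ transversally, i.e. $\partial\Delta/\partial Q\neq 0$ at $Q_{SN}$. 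For nondegeneracy, with $V=(1,1)^T$ one has $D^{2}F(P_3;Q_{SN})(V,V)=\big(\partial_{uu}F_1+2\partial_{uv}F_1+\partial_{vv}F_1,\ \partial_{uu}F_2+2\partial_{uv}F_2+\partial_{vv}F_2\big)^{T}$ evaluated at $P_3$, and it remains to show $W^{T}D^{2}F(P_3;Q_{SN})(V,V)\neq 0$. Once both conditions hold, Sotomayor's Theorem yields that near $(P_3,Q_{SN})$ the system has two hyperbolic equilibria on one side of $Q=Q_{SN}$ (the pair $P_1,P_2$ of the $\Delta>0$ regime, a saddle together with a repeller or attractor by Lemmas \ref{p1}--\ref{p2}) and none on the other ($\Delta<0$, Lemma \ref{eqpC}), which collide at $P_3$---precisely a saddle-node bifurcation at $P_3$.

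The main obstacle is the nondegeneracy condition. Since $F_1=u(u+C)\big((u-M)(1-u)-Q(u+B)v\big)$ is a degree-four polynomial in $u$ with a bilinear $uv$-term and $F_2=Sv(u+B)(u-v+C)$ is quadratic, the required second derivatives are lengthy; after substituting $u_3=(1+M-Q(B+C))/(2(1+Q))$ together with the tangency relations $p(u_3)=Q d(u_3)$ and $p'(u_3)=Q d'(u_3)$ (both equivalent to $\Delta=0$), I expect $W^{T}D^{2}F(P_3;Q_{SN})(V,V)$ to reduce to a manifestly nonzero expression in the parameters. Any locus on which it vanished would correspond to a higher-codimension degeneracy---a cusp, or the Bogdanov--Takens point where in addition $\tr J(P_3)=0$, i.e. $C=C_{SN}$---which is excluded here and treated separately elsewhere in the paper.
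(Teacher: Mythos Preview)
Your approach is essentially identical to the paper's: both invoke Sotomayor's Theorem, take the right null vector $(1,1)^{T}$, use a left null vector proportional to $(S,-Qu_3)^{T}$ (the paper writes it as $(-S/(Qu_3),1)^{T}$ after substituting $u_3$), compute $F_Q(P_3)=(-u_3(u_3+B)(u_3+C)^2,0)^{T}$, and verify the transversality condition $W^{T}F_Q\neq 0$ directly. Your extra care in requiring $C\neq C_{SN}$ so that the zero eigenvalue is simple is a point the paper glosses over in the theorem statement, and is correct.

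The one genuine gap in your proposal is the nondegeneracy condition $W^{T}D^{2}F(P_3;Q_{SN})(V,V)\neq 0$: you say you \emph{expect} the expression to reduce to something manifestly nonzero, but you do not carry out the reduction. The paper does: it computes $D^{2}F(P_3;Q)(U,U)$ explicitly and shows that $W\cdot D^{2}F(P_3;Q)(U,U)>0$ as a sum of visibly positive terms (using $0<M<1$ and $u_3>0$). Since this is precisely the condition that distinguishes a saddle--node from a degenerate (cusp) point, and since you yourself flag it as ``the main obstacle,'' the argument is not complete until that computation is actually done. The tangency relations $p(u_3)=Qd(u_3)$ and $p'(u_3)=Qd'(u_3)$ you propose to use are exactly the right simplifying substitutions; carrying them through yields the positive expression the paper records.
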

\begin{proof}
In order to prove the saddle-node bifurcation at $P_3=(u_3,u_3+C)$ with $u_3=(1+M-Q(B+C))/(2(Q+1))$ we follow the Sotomayor's Theorem \cite{perko}. First, if we consider $\Delta=0$ then system \eqref{eq:05} has one positive equilibrium point $P_3=(u_3,u_3+C)$. Moreover, in Lemma~\ref{p1=p2} we showed that if $\Delta=0$, then $\det(J(P_3))=0$. So, $\lambda=0$ is an eigenvalue of the Jacobian matrix $J(P_3)$ with eigenvector $U=\begin{pmatrix} 1 & 1 \end{pmatrix}^T$. Furthermore, we denote $W$ as the eigenvector corresponding to the eigenvalue $\lambda=0$ of the Jacobian matrix $J(P_3)^T$
$$W=\begin{pmatrix}
-\dfrac{2S(Q+1)}{Q(1+M-Q(B+C))} & 1
\end{pmatrix}^T$$
	
The vector form of system \eqref{eq:05} is given by
\begin{align*}
F((u,v);Q) &=\begin{pmatrix}
u(u+C)((1-u)(u_3-M) -Q(u+B)v)\\ 
Sv(u+B)(u-v+C)
\end{pmatrix},
\end{align*}
then differentiating $F$ with respect to the bifurcation parameter $Q$ at $P_3$ gives
\begin{align*}
F_Q(P_3;Q)&=\begin{pmatrix}
-u_3(u_3+B)(u_3+C)^2\\ 
0
\end{pmatrix},
\end{align*}
with $-u_3(u_3+B)(u_3+C)^2=\dfrac{1}{16(1+Q)^4}(1+M-BQ-CQ)(1+M+BQ-CQ+2B)(-2C-M+BQ-CQ-1)^2.$

Therefore,
\begin{align*}
&W\cdot F_Q(P_3;Q)=\\
 &-\dfrac{S(2B+1+M)(1+M-BQ-CQ)(2C+M-BQ+CQ+1)^2}{8Q(Q+1)^3(M-Q(B+C)+1)}-\\
&\dfrac{SQ(B-C)(1+M-BQ-CQ)(2C+M-BQ+CQ+1)^2}{8Q(Q+1)^3(M-Q(B+C)+1)}<0,
\end{align*}
since we assumed $\Delta=0$ and $u_3>0$. 

Next, we analyse the expression $W \cdot [D^2F(P_3;Q)(U,U)]$.
Therefore, we first compute the Hessian matrix at the equilibrium point $P_3$
\[\begin{aligned}	
D^2F(P_3;Q)(U,U)=
 \begin{pmatrix}
-2(C(2-M)+3-2M+Q(3(2+B)+C(3+B)))\\ 
2CS
\end{pmatrix}&. 
\end{aligned}\]
Hence, since $M \in (0,1)$, we get
\[\begin{aligned}
&W \cdot [D^2F(P_3;Q)(U,U)]=\\
&2CS+\dfrac{4S(Q+1)(C(2-M)+2(1-M)+1+6Q+BCQ+3BQ+3CQ)}{Q(1+M-Q(B+C))}\\
&>0 \,,
\end{aligned}\]
again since we assumed $\Delta=0$ and $u_3>0$. 

Thus, the conditions of Sotomayor's Theorem \cite{perko} are satisfied. Hence, system \eqref{eq:05} experiences a saddle-node bifurcation at the equilibrium point $P_3$.
\end{proof}

If $\Delta=0$ \eqref{delta} and $C=C_{SN}=f(u_3)$, then the equilibrium points collapse and system \eqref{eq:05} has one positive equilibrium point $P_3$. This equilibrium point is a cusp point given that a non-degeneracy condition is met. To show that, we first translate the equilibrium point $P_3=(u_3,u_3+C)$ to the origin by setting $X=u-u_3$ and $Y=v-u_3-C$ and expand system \eqref{eq:05} in a power series around the origin. System \eqref{eq:05} can now be written as
\begin{equation}\label{BTe1}
\begin{aligned}
\dfrac{dX}{d\tau}=& \dfrac{1}{Q^2}S(S+BQ)(S+CQ)X-\dfrac{1}{Q^2}S(S+BQ)(S+CQ)Y+\dfrac{1}{4(Q+1)^2}(BCQ^3\\
&-3B^2Q^2-B^2Q^3+BCQ^2+4BCQ+4BMQ+4BQ+2C^2Q-CMQ^2\\
&+CMQ-2CM-CQ^2+CQ-2C+M^2Q-M^2+2MQ-2M+Q\\
&-1) X^2-\dfrac{1}{4(Q+1)^2}Q(-2C-M+BQ-CQ-1)(-M+BQ+CQ\\&-1)XY+\mathcal{O}(|X,Y|^3),\\
\dfrac{dY}{d\tau}=&\dfrac{1}{Q^2}S(S+BQ)(S+CQ)X+\dfrac{1}{Q^2}S(S+BQ)(S+CQ)Y+\dfrac{1}{2(Q+1)}S(2C\\
&+M-BQ+CQ+1)X^2+S(B-C)XY +\dfrac{S}{2(Q+1)}(2B+M+BQ\\
&-CQ+1)Y^2+\mathcal{O}(|X,Y|^3).
\end{aligned}
\end{equation}
Making the affine transformation $$U=X~\text{and}~V=\dfrac{1}{Q^2}S(S+BQ)(S+CQ)X-\dfrac{1}{Q^2}S(S+BQ)(S+CQ)Y$$ system \eqref{BTe1} becomes 
\begin{equation}\label{BTe2}
\begin{aligned}
\dfrac{dU}{d\tau}=&V-\dfrac{1}{4(Q+1)^2}(2C+2M-2BQ^2+CQ^2-2C^2Q+3B^2Q^2+2B^2Q^3-2C^2Q^2\\
&-C^2Q^3+2CM-4BQ+CQ+M^2-3BCQ^2-BCQ^3-2BMQ^2+CMQ^2\\
&-4BCQ-4BMQ+CMQ+1)U^2+\dfrac{1}{4S(S+BQ)(S+CQ)(Q+1)^2}(Q^3(M\\
&-BQ-CQ+1)(2C+M-BQ+CQ+1)UV+\mathcal{O}(|U,V|^3),\\
\dfrac{dV}{d\tau}=&\dfrac{1}{Q^4}S(S+BQ)(S+CQ)(Q^2+S^3+BQS^2+CQS^2+BCQ^2S)U^2\\
&+\dfrac{1}{4Q^2(Q+1)^2}(4S^2(S+BQ)(S+CQ)(Q+1)(3B-C+M+2BQ-2CQ\\
&+1)+Q^3(M-BQ-CQ+1)(2C+M-BQ+CQ+1))UV-\dfrac{1}{2(Q+1)}(2B\\
&+M+BQ-CQ+1)V^2+\mathcal{O}(|U,V|^3).
\end{aligned}
\end{equation}
By Lemma $3.1$ presented in \cite{xiao2} we obtain an equivalent system of \eqref{BTe2} as follows
\begin{equation}\label{BTe3}
\begin{aligned}
\dfrac{dU_1}{d\tau}=&V_1,\\
\dfrac{dV_1}{d\tau}=&L_{20}U_1^2+L_{11}U_1V_1+\mathcal{O}(|U_1,V_1|^3),
\end{aligned}
\end{equation}
with $L_{20}=S(S+BQ)(S+CQ)(Q^2+S^3+BQS^2+CQS^2+BCQ^2S)/Q^4>0$ since all the parameters are positive and $L_{11}=(4S^2(S+BQ)(S+CQ)(Q+1)(3B-C+M+2BQ-2CQ+1)+Q^2(Q(M-BQ-CQ+1)(2C+M-BQ+CQ+1)-2(2C+2M-2BQ^2+CQ^2-2C^2Q+3B^2Q^2+2B^2Q^3-2C^2Q^2-C^2Q^3+2CM-4BQ+CQ+M^2-3BCQ^2-BCQ^3-2BMQ^2+CMQ^2-4BCQ-4BMQ+CMQ+1)))/(4Q^2(Q+1)^2)$. If $L_{11}\neq0$, then $P_3$ is a cusp point of codimension two by the result presented in \cite{perko}.

This is also a necessary condition for system \eqref{eq:05} to undergo a Bogdanov-Takens bifurcation \cite{perko}. One needs to vary two parameters in order to encounter this bifurcation in a structurally stable way and to describe all possible qualitative behaviours nearby \cite{chicone,perko}. The proof of a Bogdanov-Takens bifurcation can be obtained by following \cite{huang} and \cite{xiao2}. In these articles, the authors showed that their system undergoes to a Bogdanov--Takens bifurcation by unfolding the system around the cusp of codimension two. Moreover, by using a series of normal form transformations one can check the non-degeneracy condition. Nowadays, there are several computational methods to find Bogdanov-Takens points. These methods are implemented in software packages such as MATCONT \cite{matcont}. Figure \ref{Fig8} illustrates the Bogdanov-Takens bifurcation which was detected with MATCONT in the $(Q,C)$-plane with parameter values $(M,B,S)=(0.05,0.1,0.071080895)$ fixed.
\begin{figure}
\begin{center}
\includegraphics[width=7cm]{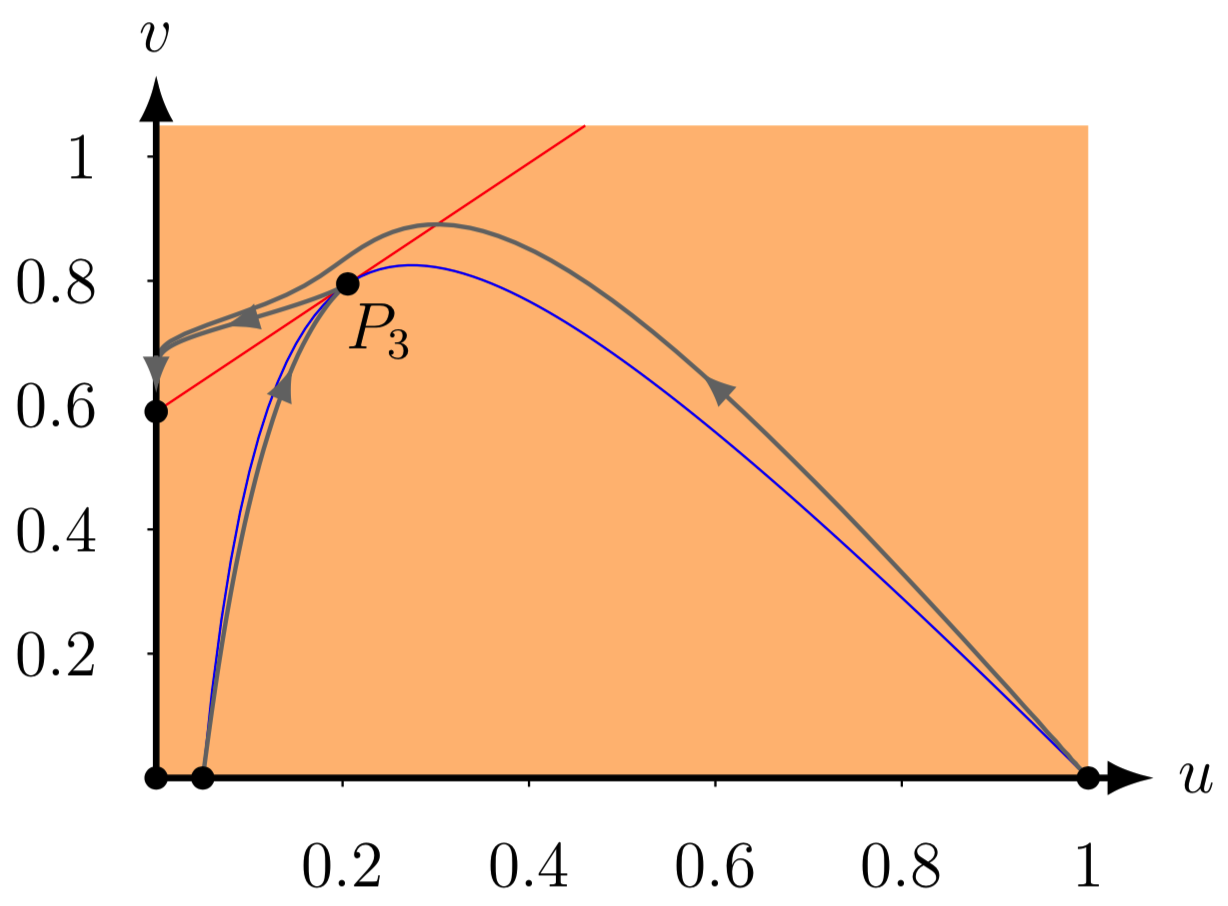}
\caption{For $M=0.05$, $B=0.05$, $C=0.58951256$, $S=0.125$ and $Q=0.60821818$, such that $\Delta=0$ and $f(u_3)=C_{SN}$, the point $(0,C)$ is an attractor and the equilibrium point $P_3$ is a cusp point.}
\label{Fig7}
\end{center}
\end{figure}

\subsection{Basins of attraction}

In this section, we analyse the impact of the modifications of the parameters $C$ and $Q$ on the basins of attraction of the stable equilibrium points of system \eqref{eq:05}. Note that the parameter $C= c/(nK)$ of system \eqref{eq:05} is equivalent to the alternative food source $c$ in system \eqref{eq:04} since the function $\varphi$ is a diffeomorphism preserving the orientation of time. Similarly, the parameter $Q =qnK/r$ of system \eqref{eq:05} is equivalent to the predation rate $q$ in system \eqref{eq:04}. In particular, we consider the system parameters $(B,M,S)=(0.1,0.1,0.157)$\footnote{Note that changing $B$ instead of $Q$ has the same qualitative effect on the basin of attraction, see right pane of Figure \ref{Fig8}.} and vary $Q$ and $C$. For $Q$ and $C$ not too big system \eqref{eq:05} has two positive equilibrium points, namely $P_1$ and $P_2$. The equilibrium points on the axis and $P_1$ do not change stability proven in Lemmas \ref{eqp01}, \ref{eqpM}, \ref{eqpC} and \ref{p1}, while, $P_2$ can be stable or unstable.

\begin{figure}
\begin{center}
\includegraphics[width=12cm]{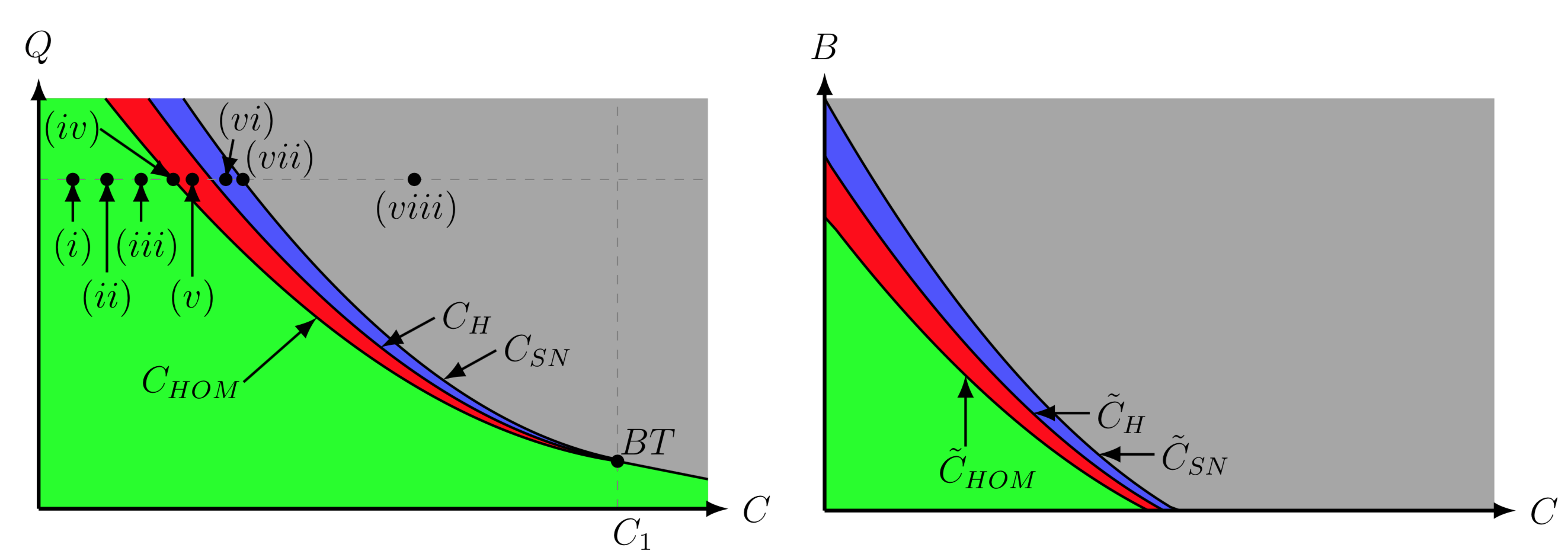}
\end{center}
\caption{The bifurcation diagram of system \eqref{eq:05} for $M=0.05$ and $S=0.071080895$ fixed and created with the numerical bifurcation package MATCONT \cite{matcont}. In the left panel $B=0.1$ fixed and varying $Q$ and $C$ and in the right panel $Q=0.608$ fixed and varying $B$ and $C$. The curve $C_H$ represents the Hopf curve, $C_{HOM}$ represents the homoclinic curve, $C_{SN}$ represents the saddle-node curve, and $BT$ represents the Bogdanov-Takens bifurcation.The corresponding phase planes for the different regions are shown in Figure \ref{Fig9}.}
\label{Fig8}
\end{figure}
In order to study the basins of attraction of the equilibrium points $(0,C)$ and $P_2$ we use the same notation for the (un)stable manifold of the equilibrium point $P_1$ as used in \cite{arancibia7,arancibia3}. That is, we defind $W^{u,s}_{\nearrow}(P_1)$ as the branch of the (un)stable manifold of $P_1$ that goes up to the right and $W^{u,s}_{\swarrow}(P_1)$ as the branch of the (un)stable manifold of $P_1$ that goes down to the left. The branch $W^s_{\nearrow}(P_1)$ is connected with $(M,0)$ and $W^u_{\swarrow}(P_1)$ is connected with $(0,C)$ since the nullclines form a bounding box from which trajectories cannot leave. Furthermore, everything in between of these two branches and the $x$-axis also asymptotes to the equilibrium point $(0,C)$. Therefore, the stable manifold of the saddle point $P_1$ acts as a separatrix curve between the basins of attraction of $P_2$ (when it is stable) and $(0,C)$, see Figure \ref{Fig9}. 

\begin{figure}
\begin{center}
\includegraphics[width=11cm]{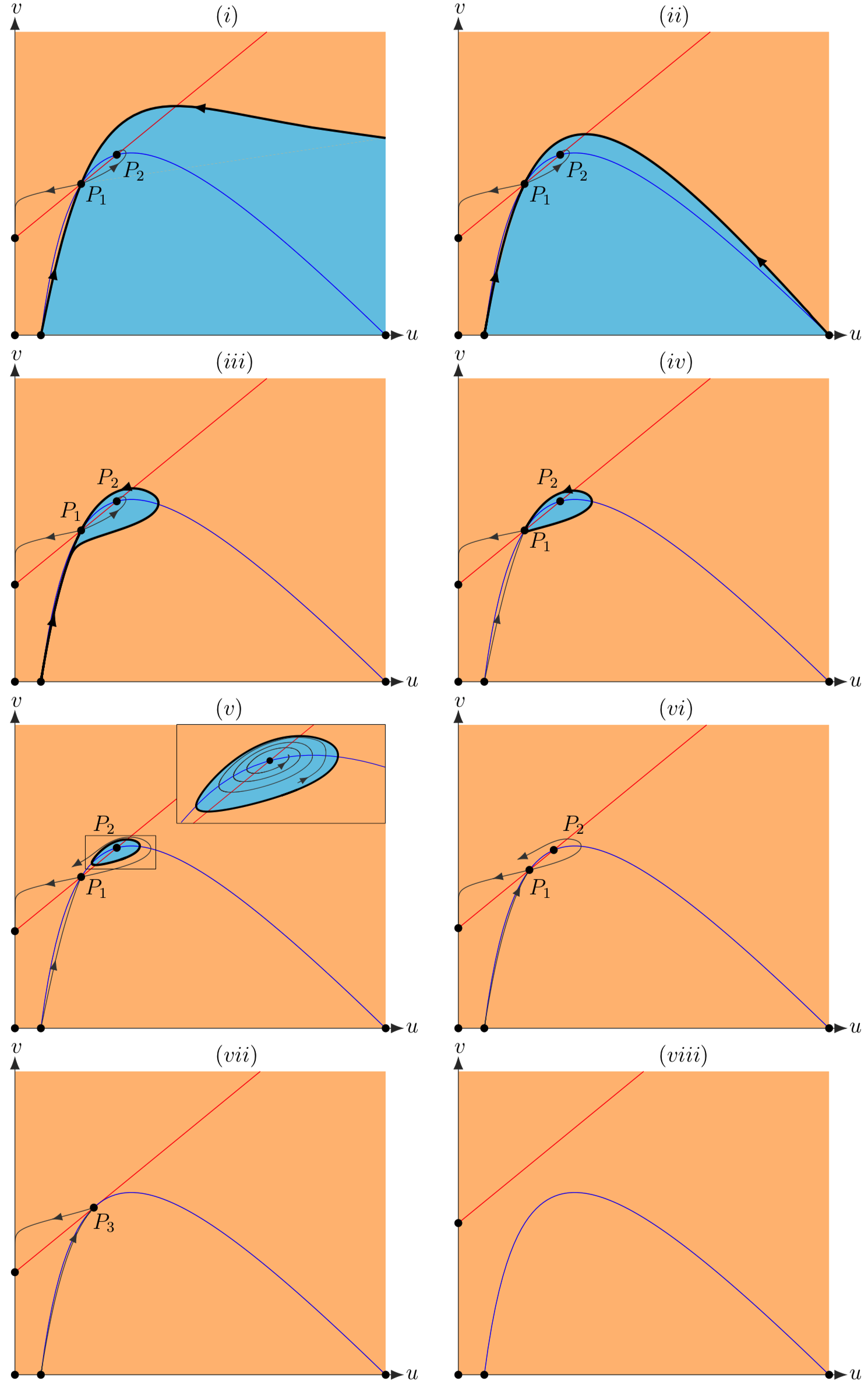}
\end{center}
\caption{The phase planes of system \eqref{eq:05} for $B=0.1$, $M=0.05$, $Q=0.75$ and $S=0.071080895$ fixed and varying $C$. This last parameter impacts the number of equilibrium points of system \eqref{eq:05}. The light blue area in the phase plane represent the basins of attraction of the equilibrium points $P_2$, while the orange area in the phase plane represent the basins of attraction of the equilibrium points $(0,C)$.}
\label{Fig9}
\end{figure}

Considering the invariant region $\Phi$, there are qualitatively six different cases for the boundaries of the basins of the equilibrium points $P_2$ and $(0,C)$, then we get:
\begin{enumerate}[(i)]
\item For $C<C_{HOM}$ such that the equilibrium point $P_2$ in system \eqref{eq:05} is stable, see Lemma \ref{p2} (since $C_{HOM}<C_{H}$). For $C$ small enough $W^s_{\swarrow}(P_1)$  intersects the boundary of $\Phi$. Hence, it forms a separatrix curve in $\Phi$, see panel ($i$) in Figure \ref{Fig9}.  In addition, by increasing $C$ the stable manifold of $P_1$ connects first with $(1,0)$ and then with $(M,0)$, again forming the separatrix curve, see panels ($ii$) and ($iii$) in Figure \ref{Fig9}.
\item For $C=C_{HOM}$, then $W^s_{\swarrow}(P_1)$ connects with $W^u_{\nearrow}(P_1)$, therefore it form a homoclinic curve. Which is the separatrix curve between the basins of attraction of $(0,C)$ and $P_2$, see panel ($iv$) in Figure \ref{Fig9}.
\item For $C_{HOM}<C<C_H$, there is an unstable limit cycle surrounding $P_2$ which acts as a separatrix curve between the basins of attraction of $P_2$ and $(0,C)$. This limit cycle is created around $P_2$ via the Hopf bifurcation at $C=C_{H}$ \cite{gaiko} and terminates via a homoclinic bifurcation at $C=C_{HOM}$, see panel ($v$) in Figure \ref{Fig9}.
\item For $C_H<C<C_{SN}$, the equilibrium point $P_2$ is unstable, see Lemma \ref{p2}, and $(0,C)$ is globally asymptotically stable. Hence, $\Phi$ is the basin of attraction of $(0,C)$, see panel ($vi$) in Figure \ref{Fig9}.
\item For $C=C_{SN}$, the equilibrium points $P_1$ and $P_2$ collapse, see Lemma \ref{p1=p2}. Hence, $\Phi$ is the basin of attraction of $(0,C)$, see panel ($vii$) in Figure \ref{Fig9}.
\item For $C_{SN}<C$, system \eqref{eq:05} dose not have positive equilibrium points, see Lemma \ref{eqpC}. Hence, $\Phi$ is also the basin of attraction of $(0,C)$, see panel ($viii$) in Figure \ref{Fig9}.
\end{enumerate}

\section{Conclusions}\label{con}

In this manuscript, a modified May--Holling--Tanner predator-prey model with multiple Allee effects for the prey and alternative food sources for the predators was studied. Using a diffeomorphism, we transformed the modified May--Holling--Tanner predator-prey model to a topologically equivalent system, system \eqref{eq:05}. Subsequently, we analysed system \eqref{eq:05} and we proved that the equilibrium points $(0,0)$ and $(1,0)$ are saddle points, $(M,0)$ is a repeller and $(0,C)$ is an attractor for all parameter values, see Lemmas \ref{eqp01}, \ref{eqpM} and \ref{eqpC}. Additionally, there exist at most two positive equilibrium points, one of them, $P_1$, is a saddle point, while the other, $P_2$, can be an attractor or a repeller, depending on the trace of its Jacobian matrix. Both equilibrium points can collapse having conditions for a saddle node bifurcations and cusp point \cite{xiao2} (Bogdanov-Takens bifurcation). We also showed the existence of a homoclinic curve, determined by the stable and unstable manifolds of the equilibrium point $P_1$ enclosing the second equilibrium point $P_2$. When the homoclinic breaks it creates a non-infinitesimal limit cycle, see Lemmas \ref{p1}, \ref{p2} and Figure \ref{Fig8}. 

Moreover, by choosing the bifurcation parameters $(C,Q)$, or $(B,C)$, we have obtained significant bifurcation diagrams, see Figure \ref{Fig8}. It follows that -- for a large nondimensionalised predation $Q$ and a small nondimensionalised proportion of alternative food  $C$ -- co-existence is expected. Similarly, when the proportion of nondimensionalised alternative food $C$ is bigger than the proportion of nondimensionalised predation $Q$ co-existence is expected. The bifurcation diagrams and associated phase planes, see Figure \ref{Fig9}, also shows that there exists complexity for system \eqref{eq:05} including the collision of the equilibrium points leading to different type of bifurcation. 

Since the function $\varphi$ is a diffeomorphism preserving \cite{martinez2}the orientation of time, the dynamics of system \eqref{eq:05} are topologically equivalent to the dynamics of system \eqref{eq:04}. Hence, the parameters $(C,Q)$ impact the number of equilibrium points of system \eqref{eq:05} in the first quadrant and change the behaviour of the system, and, as $C = c/(nK)$ and $Q = qnK/r$, the system parameters $(c,n,k,q,r)$ will thus impact the behaviour of system \eqref{eq:04}. Therefore, self-regulation depends on the values of these  parameters. For instance, keeping all parameters fixed, but increasing the alternative food source $c$, one expects to see a change in behavior and dynamics similar to the one shown in Figure \ref{Fig8} and \ref{Fig9}. All these results show that dynamical behavior of system \eqref{eq:04} becomes more complex under the modification of the system parameters when compared to the May--Holling--Tanner model with the strong and weak Allee effect \eqref{allee1} studied in \cite{martinez2}.
\begin{figure}
\begin{center}
\includegraphics[width=9cm]{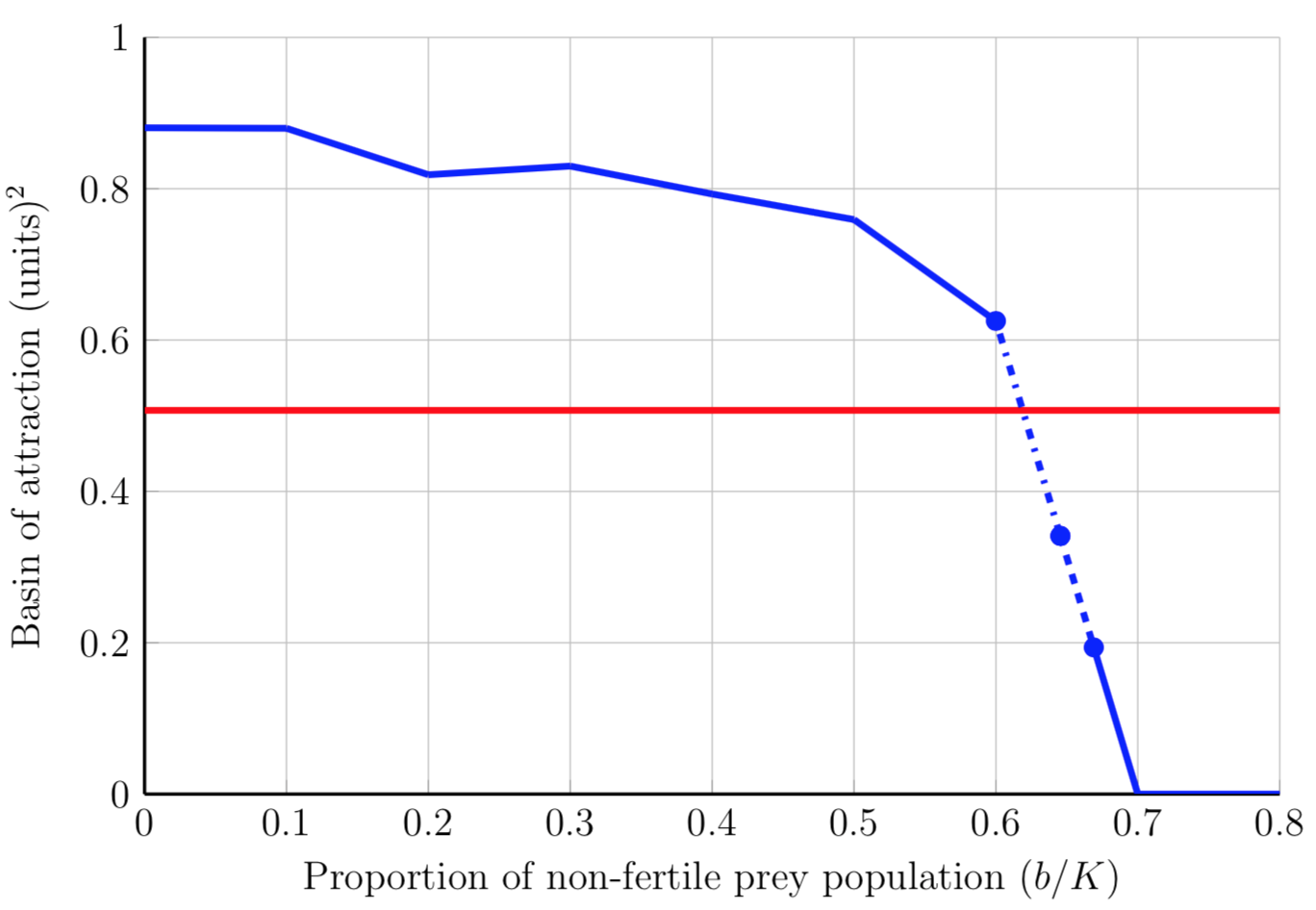}
\end{center}
\caption{The size of the basin of attraction of $p_2$, in units$^2$, of the stable equilibrium point $p_2$ of system \eqref{eq:04} considering strong Allee effect (red line) and multiple Allee effect (blue line) for varying the non-fertile population $b$ and with other system parameters $r=14$, $K=150$, $m=15$, $q=1.08$, $s=1.25$, $n=0.05$ and $c=0.75$ fixed. The blue dotted-dashed line represents the region where the stable manifold of the saddle equilibrium point $p_1$ connects with (K,0) and the blue dashed line represent the region where the equilibrium point $p_2$ is surrounded by an unstable limit cycle.}
\label{Fig10}
\end{figure}

In Figure \ref{Fig1} we showed that the inclusion of a multiple Allee effect changes the shape of the per-capita growth of the prey, and, in particular, reduces the region of depensation. Moreover, we can see in Figure \ref{Fig10} that there exist a critical non-fertile prey population $b_{cr}$ for which the basin of attraction of the equilibrium point $p_2$ of system \eqref{eq:04} is smaller than the basin of attraction of the related $p_2$ of system \eqref{ht1} considering an alternative food source \eqref{food} and with a strong Allee effect \eqref{allee1}. Note that the non-fertile prey population of $60\%$ is realistic. For instance, Monclus {\em et al.\ }\cite{monclus} studied the impact of the different population densities on st{the} \textit{marmot} reproduction. This study used the proportion of fertile female adults which fluctuated between $2.13$ and $19.15\%$ of the total population density. Moreover, we can also conclude that the basin of attraction of the stable positive equilibrium point $p_2$ increases when we reduce the depensation in the model.

Finally, the techniques used in this manuscript show that there is a strong connection between the analysis of the manifold and the basins of attraction of the equilibrium points. This analysis can be applied in population dynamics in order to predict the behaviour in models where there is variation in the non-fertile population. Moreover, we showed that the combination of different techniques such as numerical simulations and bifurcation analysis can be very useful for showing the temporal dynamics in predation interaction.

\section*{References}
\bibliography{References.bib}
\end{document}